\numberwithin{equation}{section}
\newtheorem{theorem}{Theorem}[section]
\newtheorem{proposition}[theorem]{Proposition}
\newtheorem{corollary}[theorem]{Corollary}
\newtheorem{lemma}[theorem]{Lemma}
\newtheorem{problem}[theorem]{Problem}
\newtheorem{example}[theorem]{Example}
\newtheorem{remark}[theorem]{Remark}
\theoremstyle{definition}
\newtheorem{defn}[theorem]{Definition}
\newcommand{\Hilb}{{\mathrm{Hilb}}}
\newcommand{\Conf}{{\mathrm{Conf}}}
\newcommand{\Ind}{{\mathrm{Ind}}}
\newcommand{\ch}{{\mathrm{ch}}}
\newcommand{\VG}{{\mathrm{VG}}}
\newcommand{\codim}{{\mathrm{codim}}}
\newcommand{\symm}{{\mathfrak{S}}}
\newcommand{\II}{{\mathbf{I}}}
\newcommand{\RRR}{{\mathbf{R}}}
\newcommand{\zero}{{\mathbf{0}}}
\newcommand{\gr}{{\mathrm {gr}}}
\newcommand{\BBB}{{\mathcal{B}}}
\newcommand{\TTT}{{\mathcal{T}}}
\newcommand{\NNN}{{\mathcal{N}}}
\newcommand{\AAA}{{\mathcal{A}}}
\newcommand{\JJJ}{{\mathcal{J}}}
\newcommand{\Supp}{{\mathrm{Supp}}}
\newcommand{\Flat}{{\mathrm{Flat}}}
\newcommand{\CCC}{{\mathcal{C}}}
\newcommand{\KKK}{{\mathcal{K}}}
\newcommand{\CC}{{\mathbb{C}}}
\newcommand{\QQ}{{\mathbb{Q}}}
\newcommand{\ZZ}{{\mathbb{Z}}}
\newcommand{\PP}{{\mathbb{P}}}
\newcommand{\FF}{{\mathbb{F}}}
\newcommand{\RR}{{\mathbb{R}}}
\newcommand{\VVV}{{\mathcal{VG}}}
\newcommand{\LLL}{\mathcal{L}}
\newcommand{\FFF}{\mathcal{F}}
\newcommand{\MMM}{\mathcal{M}}
\newcommand{\III}{\mathcal{I}}
\newcommand{\Sep}{\mathrm{Sep}}
\newcommand{\Ker}{\mathrm{Ker}}
\newcommand{\yyy}{{\mathbf{y}}}
\newcommand{\zzz}{{\mathbf{z}}}
\newcommand{\Zpoints}{\mathcal{Z}}
\newcommand{\Ypoints}{\mathcal{Y}}
\newcommand{\cyc}{\text{cyc}}
\newcommand{\Aut}{\mathrm{Aut}}
\begin{document}

\title[Big Varchenko--Gelfand rings and Orbit Harmonics]
{Big Varchenko--Gelfand rings and Orbit Harmonics}

\author{Brendon Rhoades}

\address{Department of Mathematics, UC San Diego, La Jolla, CA, 92039, USA}
\email{bprhoades@ucsd.edu}

\begin{abstract}
    Let $\MMM$ be a conditional oriented matroid. We define a graded algebra $\widehat{\VVV}_\MMM$ with vector space dimension given by the number of covectors in $\MMM$ which admits a distinguished filtration indexed by the poset $\LLL(\MMM)$ of flats of $\MMM$. The subquotients of this filtration are isomorphic to graded Varchenko--Gelfand rings of contractions of $\MMM$, so we call $\widehat{\VVV}_\MMM$ the {\em graded big Varchenko--Gelfand ring of $\MMM$.} We describe a no broken circuit type basis of $\widehat{\VVV}_\MMM$ and study its equivariant structure under the action of $\Aut(\MMM)$. Our key technique is the orbit harmonics deformation which encodes $\widehat{\VVV}_\MMM$ (as well as the classical Varchenko--Gelfand ring) in terms of a locus of points. 
\end{abstract}

\maketitle

\section{Introduction}
\label{sec:Introduction}

Fix a field $\FF$. In this paper we define and study a new graded $\FF$-algebra attached to a real hyperplane arrangement $\AAA$ or, more generally, a conditional oriented matroid $\MMM$. The vector space dimension of this algebra counts faces of $\AAA$ (or covectors of $\MMM$). Our new algebra is inspired by more established algebras whose dimensions count chambers of $\AAA$ (or topes of $\MMM$). We recall the classical story and outline our new construction.

\subsection{Chambers and topes}
Let $V$ be a finite-dimensional real vector space, let $\AAA = \{H_i \,:\, i \in \III\}$ be a finite affine hyperplane arrangement in $V$, and let $\CCC$ be the set of chambers of $\AAA$. The {\em Varchenko--Gelfand ring}  $\VG_\AAA$ (with  coefficients in $\FF$) is the set of functions $\CCC \to \FF$ endowed with pointwise operations. For $i \in \III$ the Heaviside functions $h_i^\pm \in \VG_\AAA$ are the indicator functions for whether a given chamber $C \in \CCC$ lies on the positive or negative side of $H_i$. These functions generate $\VG_\AAA$ as an algebra. The {\em graded Varchenko-Gelfand ring} is the associated graded ring $\VVV_\AAA$ arising from this filtration.

The  ring $\VVV_\AAA$ is a commutative (rather than anticommutative) version of the more widely known {\em Orlik--Solomon algebra} \cite{OS} of $\AAA$. Varchenko and Gelfand \cite{VG} gave a presentation for $\VVV_\AAA$ as a polynomial ring quotient mirroring the exterior algebra quotient presentation of the Orlik--Solomon algebra. Moseley gave \cite{Moseley} a topological interpretation of $\VVV_\AAA$ as a cohomology ring where the complexified complements of the Orlik--Solomon setting \cite{Brieskorn, OS} are replaced by a complement of $\AAA$ within the triplication $V \oplus V \oplus V$ of the real vector space $V$.

There are many variations on $\VG_\AAA$ and $\VVV_\AAA$. Gelfand and Rybnikov \cite{GR} introduced a version of these rings for oriented matroids. Dorpalen-Barry \cite{DB} studied a variant which depends on an open cone $\KKK \subseteq V$. Dorpalen-Barry, Proudfoot, and Wang \cite{DPW} defined the most general version of these rings for {\em conditional oriented matroids}, a relatively new object in discrete geometry introduced by Bandelt, Chepoi, and Knauer \cite{BCK}; we recall their definition.

Let $\III$ be a finite ground set. A {\em signed subset} of $\III$ is a function $X : \III \to \{+,-,0\}$.\footnote{Other authors encode signed subsets pairs $X = (X_+,X_-)$ of disjoint subsets of $\III$.} 
Given two signed subsets $X$ and $Y$ of $\III$, their {\em separating set} is
\begin{equation}
    \Sep(X,Y) := \{ i \in \III \,:\, X(i) = -Y(i) \neq 0 \}
\end{equation}
and their {\em composition} $X \circ Y: \III \to \{+,-,0\}$ is given by
\begin{equation}
    (X \circ Y)(i) := \begin{cases}
        X(i) & \text{if $X(i) \neq 0$,} \\
        Y(i) & \text{otherwise.}
    \end{cases}
\end{equation}

A {\em conditional oriented matroid} on $\III$ is a family $\MMM$ of signed subsets of $\III$ which satisfies the following axioms.\footnote{The symbol $\LLL$ is more commonly used than $\MMM$ to refer to conditional oriented matroids. We reserve the use of $\LLL$ for posets of flats.}
\begin{enumerate}
    \item (Face Symmetry) If $X, Y \in \MMM$ then $X \circ -Y \in \MMM$.
    \item (Strong Elimination) If $X, Y \in \MMM$ and $i \in \Sep(X,Y)$, there exists $Z \in \MMM$ so that $Z(i) = 0$ and $Z(j) = (X \circ Y)(j)$ for all $j \in \III - \Sep(X,Y)$.
\end{enumerate}
Signed sets $X \in \MMM$ are called {\em covectors}. Since 
\[
X \circ Y = (X \circ -X) \circ Y = X \circ (-X \circ Y) = X \circ -(X \circ -Y)
\]
it follows from these axioms that $\MMM$ is closed under composition. A covector $X \in \MMM$ is a {\em tope} if $X(i) \neq 0$ for all $i \in \III$. We write $\TTT(\MMM)$ for the collection of topes of $\MMM$. Conditional oriented matroids relate to arrangements as follows.

\begin{example}
    \label{ex:com-hyperplane}
    Let $V$ be a finite-dimensional real vector space, let $\KKK \subseteq V$ be an open convex subset of $V$, and let $\{\alpha_i : V \to \RR \,:\, i \in \III \}$ be a finite collection of affine linear forms which defines an  arrangement $\AAA$ with hyperplanes $\alpha_i^{-1}(0)$. For any relatively open face $F$ of $\AAA$, one has a signed set $X_F : \III \to \{+,-,0\}$ given by
    \begin{equation}
        X_F(i) := \begin{cases}
            + & \text{if $\alpha_i(F) > 0$,} \\
            - & \text{if $\alpha_i(F) < 0$,} \\
            0 & \text{if $\alpha_i(F) = 0$.} 
        \end{cases}
    \end{equation}
    The family $\{ X_F \,:\, F \cap \KKK \neq \varnothing \}$ is a conditional oriented matroid on $\III$. In this context, the composition operation $\circ$ is the {\em face semigroup} operation.
\end{example}

In Figure~\ref{fig:four-lines} we have $V = \RR^2$, $\KKK$ is an open ellipital region, and $\AAA = \{H_1,H_2,H_3,H_4\}$ consists of four lines whose positive sides are indicated. The arrangement $\AAA$ partitions $\KKK$ into 13 faces, one of which is a bolded line segment with covector $(0,+,-,+)$. We will refer to Figure~\ref{fig:four-lines} throughout the paper.

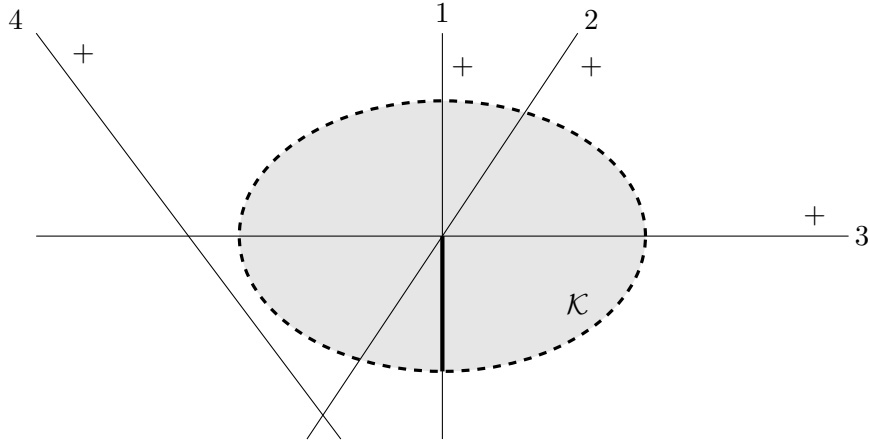
\begin{figure}
\begin{center}
    \begin{tikzpicture}[scale = 0.9]
        \draw[fill=gray!20, very thick, dashed] (0,0) ellipse (3cm and 2cm);

        \node at  (2,-1) {$\KKK$};

        \draw[-] (0,3) -- (0,-3);
        \node at (0.3,2.5) {$+$};
        \node at (0,3.3) {$1$};

        \draw[-] (2,3) -- (-2,-3);
        \node at (2.2,2.5) {$+$};
        \node at (2.2,3.2) {$2$};

        \draw [-] (-6,0) -- (6,0);
        \node at (5.5,0.3) {$+$};
        \node at (6.2,0) {$3$};

        \draw [-] (-6,3) -- (-1.5,-3);
        \node at (-5.3,2.7) {$+$};
        \node at (-6.3,3.2) {$4$};

        \draw [-, ultra thick] (0,0) -- (0,-2);
 
    \end{tikzpicture}
\end{center}
\caption{An arrangement of four lines.}
\label{fig:four-lines}
\end{figure}

Dorpalen-Barry, Proudfoot, and Wang studied \cite{DPW} the algebra $\VG_\MMM$ of functions on the set $\TTT(\MMM)$ of topes of $\MMM$. As in the case of arrangements, the algebra $\VG_\MMM$ admits a natural filtration via Heaviside functions which gives rise to an associated graded ring $\VVV_\MMM$. Dorpalen-Barry et. al. established \cite[Thm. 1.6]{DPW} a presentation of $\VG_\MMM$ and $\VVV_\MMM$ as polynomial ring quotients. A topological interpretation of these rings is also proven \cite[Thm. 1.1]{DPW} when $\MMM$ is as in Example~\ref{ex:com-hyperplane}.

\subsection{Faces and covectors}
An affine arrangement $\AAA$ in a real vector space $V$ partitions the space $V$ into {\em faces} of various dimensions; the chambers are the faces of maximal dimension. Varchenko and Gelfand introduced \cite{VG} the algebra $\widehat{\VG}_\AAA$ of functions $\FFF \to \FF$ where $\FFF$ is the set of faces of $\AAA$. We call $\widehat{\VG}_\AAA$ the {\em big Varchenko-Gelfand ring} of $\AAA$. Varchenko and Gelfand gave a presentation \cite{VG} of $\widehat{\VG}_\AAA$ in terms of Heaviside functions; Gelfand and Rybnikov \cite{GR} did the same in the context of oriented matroids.

As with the `small' VG ring, the Heaviside generators of $\widehat{\VG}_\AAA$ give  a filtration on $\widehat{\VG}_\AAA$. We write $\widehat{\VVV}_\AAA$ for the associated graded ring of this filtration and call $\widehat{\VVV}_\AAA$ the {\em graded big Varchenko-Gelfand ring} of $\AAA$. The ring $\widehat{\VVV}_\AAA$ appears to have not been studied before this paper. We prove the following results about this new graded ring. If $X$ is a flat of $\AAA$, we write $\AAA^X$ for the restriction of $\AAA$ to $X$. Recall that the {\em Hilbert series} of a graded vector space $W = \bigoplus_{d \geq 0} W_d$ is the formal power series $\Hilb(W;q) := \sum_{d \geq 0} \dim(W_d) \cdot q^d$.
\begin{itemize}
    \item We give an explicit presentation (see Theorem~\ref{thm:big-locus-identification}) of $\widehat{\VVV}_\AAA$ as a polynomial ring quotient.
    \item The big graded VG ring $\widehat{\VVV}_{\AAA}$ admits a filtration indexed by the poset of flats $\LLL(\AAA)$. The subquotient indexed by a flat $X \in \LLL(\AAA)$ is the `small' graded VG ring $\VVV_{\AAA^X}$ with degree shifted up by $\codim(X)$. (See Theorem~\ref{thm:big-locus-identification}.)
    \item The Hilbert series of $\widehat{\VVV}_\AAA$ is given (see Corollary~\ref{cor:big-hilbert-series}) by
    \[
    \Hilb(\widehat{\VVV}_\AAA;q) = \sum_{X \in \LLL(\AAA)} q^{\codim(X)} \cdot \Hilb(\VVV_{\AAA^X};q).
    \]
    \item If $\AAA$ is invariant under the action of a finite linear group $G \subseteq GL(V)$ where $\# G$ is nonzero in $\FF$, one has an isomorphism (see Theorem~\ref{thm:graded-module-structure}) of graded $G$-modules
    \[
    \widehat{\VVV}_\AAA \cong \bigoplus_{[X] \in \LLL(\AAA)/G} \Ind_{G_X}^{G}(\VVV_{\AAA^X})(-\codim(X))
    \]
    where $G_X = \{g \in G \,:\, g \cdot X = X \}$ and $(-\codim(X))$ shifts degree up by $\codim(X)$.
\end{itemize}
The results roughly state that the big VG ring for $\AAA$ is built out of small VG rings for the restrictions $\AAA^X$ for $X \in \LLL(\AAA)$. This is an algebraic enrichment of the fact that any face of $\AAA$ is a chamber of a unique restriction $\AAA^X$. These results are no more difficult to state and prove in the broader context of conditional oriented matroids; Theorem~\ref{thm:big-locus-identification}, Corollary~\ref{cor:big-hilbert-series}, and Theorem~\ref{thm:graded-module-structure} are stated in this language.

\subsection{Orbit harmonics}
The {\em orbit harmonics} technique of deformation theory stands behind the results in this paper. Let $\Zpoints \subseteq \FF^n$ be a finite point set and let $S = \FF[x_1,\dots,x_n]$ be the coordinate ring of $\FF^n$. We have the vanishing ideal $\II(\Zpoints) \subseteq S$ given by
\begin{equation}
    \II(\Zpoints) := \{ f \in S \,:\, f(\zzz) = 0 \text{ for all } \zzz \in \Zpoints \}.
\end{equation}
Since $\Zpoints$ is finite, Lagrange interpolation gives the isomorphism
\begin{equation}
\label{eq:orbit-harmonics-preliminary}
    \FF[\Zpoints] \cong S/\II(\Zpoints)
\end{equation}
of ungraded $\FF$-vector spaces of dimension $\# \Zpoints$. The {\em associated graded ideal} $\gr \, \II(\Zpoints)$ is the ideal
\begin{equation}
    \gr \, \II(\Zpoints) := (\tau(f) \,:\, f \in \II(\Zpoints), \, f \neq 0 ) \subseteq S
\end{equation}
where $\tau(f)$ is the top-degree homogeneous component of a nonzero polynomial $f$. Explicitly, if $f = f_d + \cdots + f_1 + f_0$ for $f_i$ homogeneous of degree $i$ and $f_d \neq 0$, we have $\tau(f) = f_d$. The ideal $\gr \, \II(\Zpoints)$ is homogeneous by construction and the vector space isomorphism \eqref{eq:orbit-harmonics-preliminary} extends to an isomorphism
\begin{equation}
\label{eq:orbit-harmonics-true}
    \FF[\Zpoints] \cong S/\II(\Zpoints) \cong S/\gr \, \II(\Zpoints) =: \RRR(\Zpoints)
\end{equation}
of vector spaces. The quotient $\RRR(\Zpoints) = S/\gr \, \II(\Zpoints)$ has the additional structure of a graded vector space. If the locus $\Zpoints$ is stable under the action of a finite matrix group $G \subseteq GL_n(\FF)$ and $\# G \in \FF^{\times}$, we may interpret \eqref{eq:orbit-harmonics-true} as an isomorphism of $G$-modules where $S/\gr \, \II(\Zpoints)$ has the additional structure of a graded $G$-module. 

As illustrated in the diagram below, the geometric interpretation of the orbit harmonics deformation $S/\II(\Zpoints) \leadsto S/\gr \, \II(\Zpoints)$  is the flat limit of the linear deformation of the reduced locus $\Zpoints$ to a fatpoint of degree $\# \Zpoints$ supported at the origin. The orbit harmonics deformation was introduced by Kostant \cite{Kostant}; in his context the point set $\Zpoints$ was a regular orbit of the action of a complex reflection group $W$ on its reflection representation and $\RRR(\Zpoints)$ was the $W$-coinvariant ring. Orbit harmonics has appeared in numerous places since then, with strategically chosen loci $\Zpoints$ giving rise to rings $\RRR(\Zpoints)$ related to Springer fibers \cite{GP}, Macondald-theoretic delta operators \cite{HRS}, and the Viennot shadow line avatar of the Schensted correspondence \cite{Rhoades} among other things.

\begin{center}
 \begin{tikzpicture}[scale = 0.2]
\draw (-4,0) -- (4,0);
\draw (-2,-3.46) -- (2,3.46);
\draw (-2,3.46) -- (2,-3.46);

 \fontsize{5pt}{5pt} \selectfont
\node at (0,2) {$\bullet$};
\node at (0,-2) {$\bullet$};

\node at (-1.73,1) {$\bullet$};
\node at (-1.73,-1) {$\bullet$};
\node at (1.73,-1) {$\bullet$};
\node at (1.73,1) {$\bullet$};

\draw[thick, ->] (6,0) -- (8,0);

\draw (10,0) -- (18,0);
\draw (12,-3.46) -- (16,3.46);
\draw (12,3.46) -- (16,-3.46);

\draw (14,0) circle (15pt);
\draw(14,0) circle (25pt);
\node at (14,0) {$\bullet$};

 \end{tikzpicture}
\end{center}

Let $\MMM$ be a conditional oriented matroid on the ground set $\III$. Consider the vector space $\FF^{\III \times \{+,-,0\}}$ with basis given by pairs
$(i,s)$ with $i \in \III$ and $s \in \{+,-,0\}$.
A covector $X \in \MMM$ gives rise to a point $\zzz_X \in \FF^{\III \times \{+,-,0\}}$ with $0,1$-coordinates in a natural way. Let $\Zpoints_\MMM = \{ \zzz_X \,:\, X \in \MMM \}$ be the locus of all such points; see Definition~\ref{def:big-locus} for details. We have the identification (see Proposition~\ref{prop:big-orbit-harmonics-interpretations})
\begin{equation}
\label{eq:intro-big-graded}
    \widehat{\VVV}_\MMM \cong \RRR(\Zpoints_\MMM)
\end{equation}
of graded algebras where $\widehat{\VVV}_\MMM$ is the big graded VG ring associated to $\MMM$. Equation~\eqref{eq:intro-big-graded} is a  succinct way to define $\widehat{\VVV}_\MMM$, but it would be useful to have explicit generators of the defining ideal $\gr \, \II(\Zpoints_\MMM)$ of $\RRR(\Zpoints_\MMM)$.  In Theorem~\ref{thm:big-locus-identification} we find such generators.

Due to its connection to big VG rings, we call $\Zpoints_\MMM$ the {\em big locus}. There is also a natural `small locus' $\Ypoints_\MMM$ with points indexed by the topes $\TTT(\MMM)$ in $\MMM$. In Section~\ref{sec:Small} we reformulate the results of Dorpalen-Barry, Proudfoot, and Wang \cite{DPW} on the small VG rings $\VG_\MMM$ and $\VVV_\MMM$ in terms of the small locus $\Ypoints_\MMM$.

\subsection{Organization} The rest of the paper is organized as follows. {\bf Section~\ref{sec:Discrete}} recalls various notions from (conditional, oriented) matroid theory and makes some easy combinatorial observations which will be used later on. The short {\bf Section~\ref{sec:Small}} recasts the work of Dorpalen-Barry, Proudfoot, and Wang via the orbit harmonics of the small locus $\Ypoints_\MMM$. {\bf Section~\ref{sec:Big}} is the heart of the paper and contains our main results on the big locus $\Zpoints_\MMM$ and the big graded VG ring $\widehat{\VVV}_\MMM$. {\bf Section~\ref{sec:Braid}} describes our results in the special case of the type A braid arrangement and poses a topological open question (Problem~\ref{prob:topological-interpretation}).

\section*{Acknowledgements}

The author is very grateful to Vic Reiner for ideas which initiated this project and inspiring conversations throughout. The author thanks Ethan Partida and Colin Crowley for pointing out the relationship with matroid Schubert varieties of Remark~\ref{rmk:HW}. The author thanks Jaeseong Oh for pointing out the connection between Equation~\eqref{eq:big-hilbert-series-braid} and necklaces.
The author was partially supported by NSF Grant DMS-2246846.

\section{Discrete Geometry}
\label{sec:Discrete}

This section collects basic results about (conditional  oriented) matroids which will be used later. We refer the reader to e.g. \cite{Oxley} for a comprehensive introduction to matroid theory.

\subsection{Matroids} Let $E$ be a finite ground set. A {\em matroid} $M$ on $E$ consists of a nonempty collection $\BBB$ of subsets of $E$ (called {\em bases}) such that the following exchange condition holds:
\begin{quote}
    if $B_1 \neq B_2$ are bases and $b_1 \in B_1 - B_2$, there exists $b_2 \in B_2 - B_1$ such that the set $(B_1 - \{b_1\}) \cup \{b_2\}$ is a basis.
\end{quote}
Every basis of $M$ has the same cardinality. A subset $I \subseteq E$ is {\em independent} if $I \subseteq B$ for some basis $B$. The {\em rank} of an arbitrary subset $S \subseteq E$ is 
\begin{equation}
    r(S) := \max \{ \# I \,:\, I \subseteq S \text{ is independent} \}.
\end{equation}
For a fixed integer $k \geq 0$, a {\em flat} a maximal subset $F \subseteq E$ of rank $k$. The collection $\LLL(M)$ of all flats of $M$ is called the {\em lattice of flats}; it is a geometric lattice when partially ordered by inclusion. If $F \in \LLL(M)$ is a flat, a {\em basis of $F$} is a maximal independent subset $B_F \subseteq F$. The following is well known. 

\begin{lemma}
    \label{lem:flat-containment-bases}
    Suppose $M$ is a matroid and $F_1 \subseteq F_2$ are flats of $M$. There exist bases $B_{F_i}$ of $F_i$ for $i=1,2$ so that $B_{F_1} \subseteq B_{F_2}$.
\end{lemma}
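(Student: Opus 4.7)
The plan is to reduce the lemma to the standard matroid augmentation property: any independent subset of a given set $S \subseteq E$ extends to a maximal independent subset of $S$. First I would pick any basis $B_{F_1}$ of $F_1$, i.e., a maximal independent subset $B_{F_1} \subseteq F_1$; by the definition of rank this has $\# B_{F_1} = r(F_1)$. Since $B_{F_1}$ is independent in $M$ and $B_{F_1} \subseteq F_1 \subseteq F_2$, augmentation produces an independent set $B_{F_2}$ with $B_{F_1} \subseteq B_{F_2} \subseteq F_2$ that is maximal independent in $F_2$, hence a basis of $F_2$ of size $r(F_2)$. This would immediately deliver the two nested bases $B_{F_1} \subseteq B_{F_2}$ required by the lemma.

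To justify augmentation concretely I would argue greedily: starting from $B_{F_1}$, repeatedly adjoin any element $e \in F_2 \setminus B_{F_1}$ for which the enlarged set remains independent, stopping when no such element exists. The terminal set $B_{F_2}$ is by construction a maximal independent subset of $F_2$ containing $B_{F_1}$. That $\# B_{F_2} = r(F_2)$ then follows from the basis exchange axiom, which forces any two maximal independent subsets of $F_2$ to have the same cardinality; in particular the greedy output cannot be strictly smaller than an a priori chosen basis of $F_2$.

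I do not anticipate a genuine obstacle: the argument uses nothing about the flats $F_i$ beyond the containment $F_1 \subseteq F_2$, and even this could be weakened to arbitrary nested subsets of $E$. The only mildly delicate point is citing the matroid augmentation/exchange axiom carefully enough to conclude that the greedy procedure produces a set of the correct size; once this is in hand the lemma is immediate. If one prefers a fully self-contained derivation from the exchange axiom as stated in the paper, the standard symmetric-difference argument between the greedy output and any basis of $F_2$ supplies the missing cardinality comparison.
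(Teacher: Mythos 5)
Your argument is correct, and it is the standard one: fix a basis $B_{F_1}$ of $F_1$ and greedily extend it inside $F_2$ to an inclusion-maximal independent subset $B_{F_2}$, which is by definition a basis of $F_2$ containing $B_{F_1}$. The paper itself offers no proof here --- the lemma is simply recorded as ``well known'' --- so there is no argument to compare against; yours fills the gap in the expected way. One small remark: since the paper defines a basis of a flat $F$ as a maximal independent subset of $F$, the greedy termination argument already finishes the proof, and your cardinality discussion (that $\# B_{F_2} = r(F_2)$, via exchange or a symmetric-difference comparison) is not actually needed for the statement, though it is harmless and would matter only if ``maximal'' were read as ``of maximum cardinality.''
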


\subsection{Flat poset of a conditional oriented matroid}
The author was unable to find an explicit reference to posets of flats for conditional oriented matroids; we set up the relevant terminology. Let $\MMM$ be a conditional oriented matroid on the ground set $\III$. If $X \in \MMM$ is a covector, the {\em flat} of $X$ is the subset 
\begin{equation}
    \Flat(X) := \{ i \in \III \,:\, X(i) = 0 \}.
\end{equation}
The {\em poset of flats} is the collection
\begin{equation}
    \LLL(\MMM)  := \{ \Flat(X) \,:\, X \in \MMM \}
\end{equation}
of subsets of $\III$, partially ordered by containment.\footnote{There is an unfortunate notational reversal between the settings of hyperplane arrangements and conditional oriented matroids. For an arrangement $\AAA$, the letter $X$ usually denotes a flat while the letter $F$ usually denotes a face. For a conditional oriented matroid, the letter $X$ is reserved for covectors (i.e. faces) and we use $F$ to denote flats.}

If $\MMM$ is the conditional oriented matroid shown in Figure~\ref{fig:four-lines}, the poset of flats is as follows.

\begin{center}
    \begin{tikzpicture}[scale = 1]

    \node at (0,0) {$\varnothing$};

    \node at (-2,1) {$1$};
    \node at (0,1) {$2$};
    \node at (2,1) {$3$};

    \node at (0,2) {$123$};

    \draw [-] (0,0.3) -- (0,0.7);
    \draw [-] (0,1.3) -- (0,1.7);
    \draw [-] (0.2,0.2) -- (1.8,0.8);
    \draw [-] (0.3,1.8) -- (1.8,1.2);
    \draw [-] (-0.2,0.2) -- (-1.8,0.8);
    \draw [-] (-0.3,1.8) -- (-1.8,1.2);
    \end{tikzpicture}
\end{center}
If we alter the convex open set $\KKK$ as follows
\begin{center}
    \begin{tikzpicture}[scale = 0.7]
        \draw[fill=gray!20, very thick, dashed] (-3,-2) rectangle (4,2);

        \node at  (2,-1) {$\KKK$};

        \draw[-] (0,3) -- (0,-3);
        \node at (0.3,2.5) {$+$};
        \node at (0,3.3) {$1$};

        \draw[-] (2,3) -- (-2,-3);
        \node at (2.2,2.5) {$+$};
        \node at (2.2,3.2) {$2$};

        \draw [-] (-6,0) -- (6,0);
        \node at (5.5,0.3) {$+$};
        \node at (6.2,0) {$3$};

        \draw [-] (-6,3) -- (-1.5,-3);
        \node at (-5.3,2.7) {$+$};
        \node at (-6.3,3.2) {$4$};
 
    \end{tikzpicture}
\end{center}
the new poset of flats is shown below.
\begin{center}
    \begin{tikzpicture}[scale = 1]

    \node at (0,0) {$\varnothing$};

    \node at (-3,1) {$1$};
    \node at (-1,1) {$2$};
    \node at (1,1) {$3$};
    \node at (3,1) {$4$};

    \node at (-1,2) {$123$};

    \draw [-] (-0.2,0.1) -- (-2.8,0.8);
    \draw [-] (-1,1.3) -- (-1,1.7);
    \draw [-] (-0.1,0.2) -- (-0.9,0.8);
    \draw [-] (-0.7,1.8) -- (0.8,1.2);
    \draw [-] (0.1,0.2) -- (0.9,0.8);
    \draw [-] (-1.3,1.8) -- (-2.8,1.2);
    \draw [-] (0.2,0.1) -- (2.8,0.8);
    \end{tikzpicture}
\end{center}
This example shows that the poset $\LLL(\MMM)$ need not have a maximum element and that maximal chains in $\LLL(\MMM)$ can have different lengths. However, we have the following lemma.

\begin{lemma}
    \label{lem:flat-semilattice}
    Let $\MMM$ be a conditional oriented matroid. If $F_1$ and $F_2$ are flats of $\MMM$, so is $F_1 \cap F_2$. In particular, the poset $\LLL(\MMM)$ is a meet-semilattice.
\end{lemma}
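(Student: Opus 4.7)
The plan is to exhibit a covector in $\MMM$ whose flat is exactly $F_1 \cap F_2$, using the composition operation. Pick covectors $X_1, X_2 \in \MMM$ with $\Flat(X_j) = F_j$ for $j = 1,2$. The key observation is that the composition $X_1 \circ X_2$ is the natural candidate: by the definition of $\circ$, the $i$-th coordinate $(X_1 \circ X_2)(i)$ equals $X_1(i)$ if $X_1(i) \neq 0$ and equals $X_2(i)$ otherwise, so $(X_1 \circ X_2)(i) = 0$ if and only if $X_1(i) = 0$ and $X_2(i) = 0$. This immediately yields
\[
\Flat(X_1 \circ X_2) \;=\; \Flat(X_1) \cap \Flat(X_2) \;=\; F_1 \cap F_2.
\]

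The only remaining point is to verify that $X_1 \circ X_2$ lies in $\MMM$, and this is already recorded in the excerpt: the paper notes, immediately after stating the Face Symmetry and Strong Elimination axioms, that $\MMM$ is closed under composition as a consequence of the identity $X \circ Y = X \circ -(X \circ -Y)$. Hence $X_1 \circ X_2 \in \MMM$ and $F_1 \cap F_2 = \Flat(X_1 \circ X_2) \in \LLL(\MMM)$, proving the first assertion.

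The second assertion, that $\LLL(\MMM)$ is a meet-semilattice, is then a formal consequence. Since $\LLL(\MMM)$ is partially ordered by inclusion and $F_1 \cap F_2 \in \LLL(\MMM)$ is the largest subset of $\III$ contained in both $F_1$ and $F_2$, it is automatically the meet of $F_1$ and $F_2$ in $\LLL(\MMM)$.

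There is no real obstacle here; the whole argument rests on recognizing that the composition operation $\circ$, which was built into the axioms precisely to encode the face semigroup structure, has the effect of intersecting zero sets. The only subtlety worth flagging in the writeup is the explicit appeal to closure of $\MMM$ under $\circ$, which is not one of the two axioms but an easy consequence of them as pointed out in the paper.
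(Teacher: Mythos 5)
Your proof is correct and follows exactly the paper's argument: the paper's one-line proof also takes covectors $X_1, X_2$ with $\Flat(X_j) = F_j$ and observes that $\Flat(X_1 \circ X_2) = \Flat(X_1) \cap \Flat(X_2)$, relying on the closure of $\MMM$ under composition. You have simply spelled out the details that the paper leaves implicit.
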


\begin{proof}
    If $X_1, X_2 \in \MMM$ are covectors we have $\Flat(X_1 \circ X_2) = \Flat(X_1) \cap \Flat(X_2)$.
\end{proof}

An element $i \in \III$ is a {\em coloop} of $\MMM$ if $X(i) = 0$ for all covectors $X \in \MMM$. The minimum element $\hat{0} \in \LLL(\MMM)$ is the set of all coloops. Recall that a covector $X \in \MMM$ is a {\em tope} if $\Supp(X) = \III$ and that $\TTT(\MMM)$ is the set of topes of $\MMM$. If $\MMM$ has a coloop then $\TTT(\MMM) = \varnothing$.

\subsection{Restriction and contraction}
We describe two constructions for building new conditional oriented matroids from $\MMM$. Let $F \in \LLL(\MMM)$ be a flat of $\MMM$. The {\em restriction} of $\MMM$ to $F$ is the family 
\begin{equation}
    \MMM \mid_F := \{ X \mid_F: F \to \{+,-,0\} \,:\, X \in \MMM \}
\end{equation}
of signed subsets of $F$. Given a flat $F \in \LLL(\MMM)$, the {\em contraction} of $\MMM$ to $F$ is the family 
\begin{equation}
    \MMM^F := \{ X \mid _{\III-F}: (\III -F) \to \{+,-,0\} \,:\, X(i) = 0 \text{ for all } i \in F \}
\end{equation}
of signed subsets of $\III -F$. It follows from \cite[Lem. 1]{BCK} that both $\MMM \mid_F$ and $\MMM^F$ are conditional oriented matroids.

A conditional oriented matroid $\MMM$ on $\III$ is an {\em oriented matroid} if the zero map $\zero: \III \to \{+,-,0\}$ is a covector in $\MMM$. If $\MMM$ is an oriented matroid on $\III$, we have an underlying (unoriented) matroid $\overline{\MMM}$ on $\III$ characterized by the poset (in this case lattice) of flats $\LLL(\MMM)$. If $\MMM$ is a conditional oriented matroid and $F$ is a flat of $\MMM$, the restriction $\MMM \mid_F$ is an oriented matroid on $F$.

For any flat $F \in \LLL(\MMM)$, contraction $\MMM^F$ does not have any coloops. The following basic combinatorial identity will have algebraic manifestations in Theorem~\ref{thm:big-locus-identification} and Corollary~\ref{cor:big-hilbert-series}.

\begin{lemma}
    \label{lem:tope-contraction-count}
    Let $\MMM$ be a conditional oriented matroid on the ground set $\III$. We have the equality
    \[
    \# \MMM = \sum_{F \in \LLL(\MMM)} \# \TTT(\MMM^F).
    \]
\end{lemma}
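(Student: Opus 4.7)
The plan is to partition the set $\MMM$ of covectors according to the flat map $X \mapsto \Flat(X)$, and then identify each fiber with the set of topes of the corresponding contraction. By the very definition of $\LLL(\MMM)$, every covector $X \in \MMM$ has flat $\Flat(X) \in \LLL(\MMM)$, so we obtain the disjoint decomposition
\[
\MMM = \bigsqcup_{F \in \LLL(\MMM)} \{X \in \MMM \,:\, \Flat(X) = F\}.
\]
It therefore suffices to exhibit, for each flat $F \in \LLL(\MMM)$, a bijection between the fiber $\{X \in \MMM \,:\, \Flat(X) = F\}$ and the tope set $\TTT(\MMM^F)$.

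The natural candidate is the restriction map $X \mapsto X \mid_{\III - F}$. To verify it lands in $\TTT(\MMM^F)$, I would use the definition $\MMM^F = \{X \mid_{\III - F} \,:\, X \in \MMM,\ X(i) = 0 \text{ for all } i \in F\}$: if $\Flat(X) = F$ then $X$ vanishes on $F$, so $X \mid_{\III - F} \in \MMM^F$, and moreover $X(i) \neq 0$ for all $i \in \III - F$ since $\Flat(X) = F$ exactly, so this restriction has full support and is indeed a tope of $\MMM^F$. Injectivity is immediate because any two covectors in the fiber agree on $F$ (both are zero there).

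For surjectivity, given $Y \in \TTT(\MMM^F)$, the definition of $\MMM^F$ supplies some $X \in \MMM$ with $X(i) = 0$ for $i \in F$ and $X \mid_{\III - F} = Y$; the tope condition $Y(i) \neq 0$ for all $i \in \III - F$ then forces $\Flat(X) = F$ exactly, so $X$ lies in the desired fiber. Summing the bijection over all flats yields the claim.

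I do not expect a genuine obstacle here; the entire content is unpacking the definitions of $\Flat$, of the contraction $\MMM^F$, and of a tope. The only point requiring a moment's care is recognizing that ``$\Flat(X) = F$'' is a strict equality (not just containment), which is exactly what forces the restriction to have full support on $\III - F$ and thus to be a tope rather than a general covector of $\MMM^F$.
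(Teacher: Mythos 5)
Your proposal is correct and is essentially the paper's own argument: the paper also partitions $\MMM$ by the flat map and notes that $X \mapsto X\mid_{\III - F}$ gives a bijection onto $\bigsqcup_{F \in \LLL(\MMM)} \TTT(\MMM^F)$, merely stating the bijection without spelling out the injectivity and surjectivity checks that you carry out. Nothing further is needed.
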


\begin{proof}
    If $X \in \MMM$ is a covector and $\Flat(X) = F$, then $X \mid_{\III-F}$ is a tope of $\MMM^F$.  The assignment $X \mapsto X \mid_{\III - F}$ defines a bijection
    \[
    \MMM \xrightarrow{\,\, \sim \, \,} \bigsqcup_{F \in \LLL(\MMM)} \TTT(\MMM^F)
    \]
    where $\sqcup$ stands for disjoint union.
\end{proof}

\subsection{Circuits and NBC sets}
The notion of a `circuit' of a conditional oriented matroid was introduced by Dorpalen-Barry, Proudfoot, and Wang \cite{DPW}. A signed set $X: \III \to \{+,-,0\}$ is a {\em circuit} of $\MMM$ if the following two conditions are satisfied.
\begin{enumerate}
    \item We have $X \circ Y \neq Y$ for all $Y \in \MMM$.
    \item If $Z: \III \to \{+,-,0\}$ is a proper signed subset of $X$, there exists $Y_0 \in \MMM$ such that $Z \circ Y_0 = Y_0$.
\end{enumerate}
A circuit $X$ of $\MMM$ is called {\em symmetric} if $-X$ is also a circuit. An element $i \in \III$ is a coloop if and only if $\MMM$ has a symmetric circuit with support $\{i\}$. Every circuit in an oriented matroid is symmetric.

Dorpalen-Barry, Proudfoot, and Wang established an important relationship between circuits and topes. Let $<$ be an arbitrary but fixed total order on the ground set $\III$ of a conditional oriented matroid $\MMM$. A subset $N \subseteq \III$ is called {\em no broken circuit (NBC)} if the following two conditions are satisfied.
\begin{enumerate}
    \item If $X$ is a circuit of $\MMM$, the containment $\Supp(X) \subseteq N$ does not hold.
    \item If $X$ is a symmetric circuit of $\MMM$, the containment $\Supp(X)^\circ \subseteq N$ does not hold, where $\Supp(X)^\circ$ is $\Supp(X)$ with its $<$-smallest element removed.
\end{enumerate}
Let $\NNN(\MMM)$ be the family of NBC sets. We have \cite[Prop. 4.2]{DPW} the important coincidence 
\begin{equation}
    \label{eq:nbc-tope}
    \# \TTT(\MMM) = \# \NNN(\MMM)
\end{equation}
of the number of topes of $\MMM$ with the number of NBC sets.

\subsection{Basic sets for flats}Let $\MMM$ be a conditional oriented matroid on $\III$; we have the following partially defined closure operation on subsets of $\III$. For any $C \subseteq \III$ which is contained in at least one flat of $\LLL(\MMM)$, define $\overline{C}$ to be the containment-minimal flat in $\LLL(\MMM)$ such that $C \subseteq \overline{C}$; such a flat $\overline{C}$ is guaranteed by Lemma~\ref{lem:flat-semilattice}.
If $C$ is not contained in any flat of $\MMM$, the symbol $\overline{C}$ is undefined.

Suppose $F \in \LLL(\MMM)$ is a flat of the conditional matroid $\MMM$ on $\III$.
A subset $B \subseteq F$ is {\em basic for $F$} if $\dots$
\begin{enumerate}
    \item  we have $\overline{B} = F$, and
    \item  if $C \subseteq B$ and $\overline{C} = F$ then $C = B$.
\end{enumerate}
A subset $C \subseteq \III$ is {\em nonbasic} if it is not basic for any flat $F$. Equivalently, a subset $C \subseteq \III$ is nonbasic if either $\dots$
\begin{itemize}
    \item $C \not\subseteq F$ for any flat $F \in \LLL(\MMM)$, or
    \item $C$ is contained in at least one flat and there is a proper subset $C' \subsetneq C$ such that $\overline{C'} = \overline{C}$.
\end{itemize}
If $\MMM$ is the conditional oriented matroid in Figure~\ref{fig:four-lines} and $F = \{1,2,3\}$, the basic sets for $F$ are $\{1,2\}, \{1,3\},$ and $\{2,3\}$. We have the following basic lemma about basic sets.

\begin{lemma}
    \label{lem:basic-lemma}
    Let $\MMM$ be a conditional oriented matroid on $\III$.
    \begin{enumerate}
        \item If $F \in \LLL(\MMM)$ is a flat, there is at least one basic set of $F$ and every basic set of $F$ has the same cardinality.
        \item If $F, F' \in \LLL(\MMM)$ are flats with $F' \subseteq F$, there exist basic sets $B$ for $F$ and $B'$ for $F'$ so that $B' \subseteq B$.
    \end{enumerate}
\end{lemma}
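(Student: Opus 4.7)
The plan is to identify basic sets for a flat $F \in \LLL(\MMM)$ with bases of the underlying (unoriented) matroid of the oriented matroid $\MMM \mid_F$; call this underlying matroid $M_F$. Its ground set is $F$. Once this identification is in place, part~(1) is the standard fact that every matroid has a basis and all bases have equal cardinality, and part~(2) follows by applying Lemma~\ref{lem:flat-containment-bases} to $M_F$.

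To match flats between $\MMM$ and $M_F$, I would pick a covector $X_F \in \MMM$ with $\Flat(X_F) = F$, which exists because $F \in \LLL(\MMM)$. For any $X \in \MMM$, the definition of composition gives $\Flat(X_F \circ X) = F \cap \Flat(X)$. This shows that the flats of $\MMM$ contained in $F$ are exactly the flats of $\MMM \mid_F$, viewed as subsets of $F$, and these are in turn the flats of the underlying matroid $M_F$. Since both $\overline{C}$ in $\MMM$ and its $M_F$-analogue are computed as the containment-minimum flat in these identical families that contains $C$, the two notions agree for every $C \subseteq F$. Therefore $B \subseteq F$ is basic for $F$ in $\MMM$ if and only if $B$ is a minimal subset of $F$ whose $M_F$-closure equals $F$, i.e., a basis of $M_F$. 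Part~(1) is now immediate.

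For part~(2), suppose $F' \subseteq F$ are flats of $\MMM$. By the correspondence, $F'$ is a flat of $M_F$, and the matroid restriction $M_F \mid_{F'}$ equals $M_{F'}$, since both are matroids on $F'$ whose flats are precisely the flats of $\MMM$ contained in $F'$. Applying Lemma~\ref{lem:flat-containment-bases} to $M_F$ and its flats $F' \subseteq F$ produces nested bases $B' \subseteq B$ of $F'$ and $F$ respectively; by the identification, $B'$ is basic for $F'$ and $B$ is basic for $F$ in $\MMM$. The main subtlety in this approach is the back-and-forth bookkeeping between flats of $\MMM$, of $\MMM \mid_F$, and of $M_F$, but once that correspondence is pinned down, everything reduces to standard unoriented matroid theory.
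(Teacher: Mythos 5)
Your proof is correct and takes essentially the same approach as the paper: identify basic sets for $F$ with bases of the underlying matroid of the oriented matroid restriction $\MMM \mid_F$, then get (1) from equicardinality of matroid bases and (2) from Lemma~\ref{lem:flat-containment-bases}. The paper's proof is simply a terser version of this; your bookkeeping (flats of $\MMM$ contained in $F$ coincide with flats of $\MMM\mid_F$, and the two closure operators agree on subsets of $F$) just makes explicit what the paper leaves implicit.
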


\begin{proof}
    The restriction $\MMM \mid_F$ of $\MMM$ to $F$ is an oriented matroid on the ground set $F$. Bases of the underlying oriented matroid $\overline{\MMM \mid_F}$ are basic sets of $F$; this proves (1). Since $F'$ is a flat of $\overline{\MMM \mid_F}$, (2) follows from Lemma~\ref{lem:flat-containment-bases}.
\end{proof}

\section{The small locus}
\label{sec:Small}

Throughout this section we fix a ground set $\III$ and conditional oriented matroid $\MMM$ on $\III$. Recall that a covector $X \in \MMM$ is a tope if $X(i) \neq 0$ for all $i$ and that $\TTT(\MMM)$ is the set of topes of $\MMM$.

\subsection{A locus for topes}
Consider the affine space $\FF^{\III \times \{+,-\}}$  with basis  \[\{ (i,s) \,:\, i \in \III, s \in \{+,-\} \}.\] For any tope $X \in \TTT(\MMM)$ we define a point $\yyy_X \in \FF^{\III \times \{+,-\}}$ with coordinates
\begin{equation}
    (\yyy_X)_{i,s} := \begin{cases}
        1 & \text{if $X(i) = s$,} \\
        0 & \text{otherwise.}
    \end{cases}
\end{equation}

\begin{defn}
    \label{def:small-locus}
     The {\em small locus} of $\MMM$ is the subset $\Ypoints_\MMM := \{ \yyy_X \,:\, X \in \TTT(\MMM) \}$ of $\FF^{\III \times \{+,-\}}$.
\end{defn}

    For example, let $\MMM$ be the conditional oriented matroid coming from Figure~\ref{fig:four-lines}. The small locus $\Ypoints_\MMM$ consists of $\# \TTT(\MMM) = 6$ points. The coordinates of these 6 points in \[\FF^{\III \times \{+,-\}} = \FF^{\{1,2,3,4\} \times \{+,-\}}\] are given by the rows of the following table.
    \[
    \begin{tabular}{c c c c c c c c}
        $(1,+)$ & $(1,-)$ & $(2,+)$ & $(2,-)$ & $(3,+)$ & $(3,-)$ & $(4,+)$ & $(4,-)$ \cr \hline
        1 & 0 & 1 & 0 & 1 & 0 & 1 & 0 \cr
        1 & 0 & 1 & 0 & 0 & 1 & 1 & 0 \cr
        0 & 1 & 1 & 0 & 0 & 1 & 1 & 0 \cr
        0 & 1 & 0 & 1 & 0 & 1 & 1 & 0 \cr
        0 & 1 & 0 & 1 & 1 & 0 & 1 & 0 \cr
        1 & 0 & 0 & 1 & 1 & 0 & 1 & 0 \cr
    \end{tabular}
    \]

\subsection{VG rings and orbit harmonics}
We write the coordinate ring $\FF^{\III \times \{+,-\}}$ as 
\[
S := \FF[y_i^+, y_i^- \,:\, i \in \III]
\]
so that the ideals $\II(\Ypoints_\MMM)$ and $\gr \, \II(\Ypoints_\MMM)$ live in $S$.  We explain how the orbit harmonics quotient ring $\RRR(\Ypoints_\MMM) = S/\gr \, \II(\Ypoints_\MMM)$ can be identified with the graded VG ring $\VVV_\MMM$ of $\MMM$ introduced by Dorpalen-Barry, Proudfoot, and Wang \cite{DPW}.

Recall that $\TTT(\MMM)$ denotes the set of topes of $\MMM$. Write $\VG_\MMM$ for the set of functions $\TTT(\MMM) \to \FF$. The set $\VG_\MMM$ attains the structure of an $\FF$-algebra under pointwise operations and is called the {\em Varchenko-Gelfand ring} of $\MMM$. 

For each $i \in \III$, we have the {\em Heaviside functions} $h_i^\pm \in \VG_\MMM$ given by
\begin{equation}
    h_i^+(X) = \begin{cases}
        1 & X(i) = + \\
        0 & X(i) = -
    \end{cases}  \quad \quad \quad
    h_i^-(X) = 1 - h_i^+(X) = \begin{cases}
        0 & X(i) = + \\
        1 & X(i) = -.
    \end{cases}
\end{equation}
These functions generate $\VG_\MMM$ as an $\FF$-algebra. We have a filtration $F_0 \subseteq F_1 \subseteq \cdots \subseteq \VG_\MMM$ where
\begin{equation}
    F_k = \{ \text{polynomials of degree $\leq k$ in the $h_i^\pm$}\} \subseteq \VG_\MMM.
\end{equation}
We write $\VVV_\MMM$ for the associated graded algebra, i.e.
\begin{equation}
    \VVV_\MMM = \bigoplus_{k \geq 0} F_k/F_{k-1}
\end{equation}
where $F_{-1}:=0$. The algebra $\VVV_\MMM$ is the {\em graded Varchenko-Gelfand ring} of $\MMM$.

\begin{proposition}
    \label{prop:small-locus-identification}
    The assignments $y_i^+ \mapsto h_i^+$ and $y_i^- \mapsto h_i^-$ induce isomorphisms of $\FF$-algebras
    \begin{equation}
        S/\II(\Ypoints_\MMM) \cong \VG_\MMM \quad \quad \text{and} \quad \quad \RRR(\Ypoints_\MMM) = S/\gr \, \II(\Ypoints_\MMM) \cong \VVV_\MMM.
    \end{equation}
\end{proposition}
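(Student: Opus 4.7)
The strategy is to first establish the ungraded isomorphism $S/\II(\Ypoints_\MMM) \cong \VG_\MMM$ via Lagrange interpolation on the finite locus $\Ypoints_\MMM$, then deduce the graded statement by passing to associated graded algebras. Both steps are essentially formal once one identifies the correct evaluation map.

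For the ungraded part, define the evaluation homomorphism $\varphi: S \to \VG_\MMM$ sending $f \in S$ to the function $X \mapsto f(\yyy_X)$ on $\TTT(\MMM)$; this is a ring map because both sides use pointwise operations. On generators, the coordinate $(\yyy_X)_{i,+}$ equals $1$ if $X(i) = +$ and $0$ otherwise, which matches the value $h_i^+(X)$; hence $\varphi(y_i^+) = h_i^+$, and likewise $\varphi(y_i^-) = h_i^-$. The assignment $X \mapsto \yyy_X$ is a bijection $\TTT(\MMM) \to \Ypoints_\MMM$ (distinct topes differ in some coordinate), so $\Ker(\varphi) = \II(\Ypoints_\MMM)$ by definition of the vanishing ideal. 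Surjectivity of $\varphi$ is Lagrange interpolation on the finite set $\Ypoints_\MMM$. This gives the first isomorphism.

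For the graded statement, grade $S$ by total degree in the $y_i^\pm$ and consider the induced filtration $S_{\leq \bullet}$. By definition, the Heaviside filtration satisfies $F_k = \varphi(S_{\leq k})$, since both are the $\FF$-span of products of at most $k$ Heaviside functions. Thus $\varphi$ is a surjection of filtered algebras whose kernel $\II(\Ypoints_\MMM)$ is a filtered ideal. Passing to associated graded algebras yields
\[
\gr(S/\II(\Ypoints_\MMM)) \cong \gr(\VG_\MMM) = \VVV_\MMM,
\]
and the standard commutative algebra identification $\gr(S/\II(\Ypoints_\MMM)) \cong S/\gr\,\II(\Ypoints_\MMM) = \RRR(\Ypoints_\MMM)$ finishes the proof.

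No step presents a real obstacle. The one point worth checking carefully is that the filtrations match, i.e. that a relation holding among the $h_i^\pm$ in degree $\leq k$ lifts to a polynomial in $S_{\leq k}$ belonging to $\II(\Ypoints_\MMM)$; this is automatic because $F_k$ is \emph{defined} as $\varphi(S_{\leq k})$. As a sanity check, both $\RRR(\Ypoints_\MMM)$ and $\VVV_\MMM$ have $\FF$-dimension $\#\TTT(\MMM) = \#\Ypoints_\MMM$, so no dimension is lost on either side of the associated-graded passage.
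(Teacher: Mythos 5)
Your proof is correct and takes essentially the same approach as the paper. The paper's proof is a single sentence — noting that the bijection $X \mapsto \yyy_X$ between topes and points of the locus identifies $\FF[\Ypoints_\MMM] = S/\II(\Ypoints_\MMM)$ with $\VG_\MMM$ so that $y_i^\pm$ and $h_i^\pm$ agree as functions — and then implicitly invokes the general orbit harmonics machinery from the introduction (the identification $S/\II(\Zpoints) \cong S/\gr\,\II(\Zpoints)$ and the compatibility of filtrations); you have simply spelled out those same steps in full, including the standard fact $\gr(S/I) \cong S/\gr\,I$ for the degree filtration, which is exactly what the paper's framework supplies.
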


\begin{proof}
    We have an identification $\FF[\Ypoints_\MMM] = \VG_\MMM$ under which the coordinate functions $y_i^\pm \in S$ evaluate in the same way as the Heaviside functions $h_i^\pm \in \VG_\MMM$.
\end{proof}

With Proposition~\ref{prop:small-locus-identification} in mind, explicit generators for the defining ideals $\II(\Ypoints_\MMM)$ and $\gr \, \II(\Zpoints_\MMM)$ of $\VG_\MMM$ and $\VVV_\MMM$ were given by Dorpalen-Barry, Proudfoot, and Wang.  Given a set of variables $\{z_1,\dots,z_n\}$, we write $e_d(z_1,\dots,z_n)$ for the elementary symmetric polynomial
\[
e_d(z_1,\dots,z_n) := \sum_{i_1 < \cdots<i_d} z_{i_1} \cdots z_{i_d}
\]
where $e_0(z_1,\dots,z_n) := 1$. In light of Proposition~\ref{prop:small-locus-identification}, the following result follows from \cite[Thm. 1.6]{DPW}.

\begin{theorem}
    \label{thm:small-ideal-generators} {\em (Dorpalen-Barry, Proudfoot, Wang \cite{DPW})} The ideal $\II(\Ypoints_\MMM) \subseteq S$ is generated by $\dots$
    \begin{enumerate}
        \item the product $y_i^+ y_i^-$ and the difference $y_i^+ + y_i^- - 1$ for each $i \in \III$, and
        \item the product $\prod_{i \in \Supp(X)} y_i^{X(i)}$ for each circuit $X$ of $\MMM$.
    \end{enumerate}
    The ideal $\gr \, \II(\Ypoints_\MMM) \subseteq S$ is generated by $\dots$
    \begin{enumerate}
        \item every degree two monomial in $\{y_i^+, y_i^-\}$ for $i \in \III$,
        \item the sum $y_i^+ + y_i^-$ for each $i \in \III$,
        \item the product $\prod_{i \in \Supp(X)} y_i^{X(i)}$ for each circuit $X$ of $\MMM$, and
        \item the elementary symmetric polynomial $e_{s-1}(y_i^{X(i)} \,:\,i \in \Supp(X))$ for each symmetric circuit $X$ of $\MMM$ with support size $\# \Supp(X) = s$.
    \end{enumerate}
\end{theorem}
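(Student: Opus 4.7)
The plan is to combine Proposition~\ref{prop:small-locus-identification} with the presentation of $\VG_\MMM$ and $\VVV_\MMM$ already established by Dorpalen-Barry, Proudfoot, and Wang in \cite[Thm.~1.6]{DPW}. The proposition turns ideal-generation statements in $S$ with quotients $S/\II(\Ypoints_\MMM) \cong \VG_\MMM$ and $S/\gr\,\II(\Ypoints_\MMM) \cong \VVV_\MMM$ into identifications of defining ideals, so it suffices to check that the generating sets here match those of \cite[Thm.~1.6]{DPW} under $y_i^\pm \leftrightarrow h_i^\pm$.

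To indicate how the verification can be carried out directly from the locus, I would first show that every listed polynomial lies in the claimed ideal. At any tope $X$, exactly one of $(\yyy_X)_{i,+}, (\yyy_X)_{i,-}$ equals $1$ and the other $0$, so $y_i^+ y_i^-$ and $y_i^+ + y_i^- - 1$ vanish on $\Ypoints_\MMM$; and for a circuit $X$, the product $\prod_{i \in \Supp(X)} y_i^{X(i)}$ evaluated at $\yyy_Y$ is nonzero exactly when $X \circ Y = Y$, which the circuit axiom forbids for any $Y \in \MMM$. For $\gr\,\II(\Ypoints_\MMM)$, each ungraded relation contributes its top-degree form, and the remaining degree-two monomials arise via the identity
\[
(y_i^+)^2 - y_i^+ = -y_i^+ y_i^- + y_i^+(y_i^+ + y_i^- - 1) \in \II(\Ypoints_\MMM),
\]
whose top form is $(y_i^+)^2$, and symmetrically for $(y_i^-)^2$. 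The symmetric-circuit generator is the key manipulation: writing $a_i := y_i^{X(i)}$ for a symmetric circuit $X$ of support size $s$, one has
\[
\prod_{i \in \Supp(X)} y_i^{-X(i)} \equiv \prod_i (1-a_i) = \sum_{k=0}^{s} (-1)^k e_k(a) \pmod{\II(\Ypoints_\MMM)},
\]
and since $(-1)^s e_s(a) = (-1)^s \prod_i a_i \in \II(\Ypoints_\MMM)$ as well, subtracting gives $\sum_{k=0}^{s-1}(-1)^k e_k(a) \in \II(\Ypoints_\MMM)$ with top form $(-1)^{s-1} e_{s-1}(a)$.

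The main obstacle, and the reason a direct appeal to \cite[Thm.~1.6]{DPW} is natural, is the reverse containment: showing no further generators are needed. I would address this by dimension matching. We know a priori that $\dim_\FF S/\II(\Ypoints_\MMM) = \#\Ypoints_\MMM = \#\TTT(\MMM)$ and that $\dim_\FF \RRR(\Ypoints_\MMM) = \#\TTT(\MMM)$. The tope-NBC coincidence $\#\TTT(\MMM) = \#\NNN(\MMM)$ from \eqref{eq:nbc-tope} then dictates the spanning set: the degree-two and linear relations clean up each pair $\{y_i^+, y_i^-\}$, while the circuit and symmetric-circuit relations enforce the NBC restrictions, so every monomial in $S$ reduces modulo the listed generators to an $\FF$-combination of NBC monomials indexed by $\NNN(\MMM)$. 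Both quotients thus have dimension at most $\#\NNN(\MMM) = \#\TTT(\MMM)$, which forces equality with $\II(\Ypoints_\MMM)$ and $\gr\,\II(\Ypoints_\MMM)$ respectively. This reduction is exactly the content of \cite[\S 4]{DPW}.
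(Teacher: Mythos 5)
Your main argument---translate via Proposition~\ref{prop:small-locus-identification} and appeal to \cite[Thm.~1.6]{DPW}---is exactly what the paper does; it states the theorem as an immediate consequence of that proposition and citation, with no further proof. Your additional sketch of a direct verification (forward containment by evaluation on the locus and the $\prod(1-a_i)$ manipulation for symmetric circuits; reverse containment by NBC spanning plus the dimension count $\#\TTT(\MMM)=\#\NNN(\MMM)$) is correct in outline and faithfully summarizes the content of \cite[\S4]{DPW}, but it is supplementary rather than a departure from the paper's route.
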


It follows from results in \cite[Sec. 5]{DPW} that the set $\{ \prod_{i \in N} y_i^+ \,:\, N \in \NNN(\MMM) \}$ of monomials in the $y_i^+$ indexed by NBC sets descends to a vector space basis of either $S/\II(\Ypoints_\MMM)$ or $S/\gr \, \II(\Ypoints_\MMM) = \RRR(\Ypoints_\MMM) \cong \VVV_\MMM$. Our `big' VG quotients will admit a basis of a similar flavor.

\section{The big locus}
\label{sec:Big}

Fix a conditional matroid $\MMM$ on the ground set $\III$. We work in the affine space $\FF^{\III \times \{+,-,0\}}$ with basis
\[\{(i,s) \,:\, i \in \III, \, s \in \{+,-,0\}\}.\] 
This section studies a locus $\Zpoints_\MMM$ whose points are indexed by covectors, not just topes. While the results in the last section were orbit harmonics reformulations of those in \cite{DPW}, the results in this section are new.

\subsection{A locus for covectors} For any covector $X \in \MMM$ we define a point $\zzz_X \in \FF^{\III \times \{+,-,0\}}$ with coordinates
\begin{equation}
    (\zzz_X)_{i,s} := \begin{cases}
        1 & X(i) = s, \\
        0 & \text{otherwise.}
    \end{cases}
\end{equation}

\begin{defn}
    \label{def:big-locus}
     The {\em big locus} of $\MMM$ is the subset $\Zpoints_\MMM := \{ \zzz_X \,:\, X \in \MMM \}$ of $\FF^{\III \times \{+,-,0\}}$.
\end{defn}

If $\MMM$ is the conditional oriented matroid coming from Figure~\ref{fig:four-lines}, the locus $\Zpoints_\MMM$ contains a point $\zzz_X \in \FF^{\III \times \{+,-,0\}}$ for each covector $X \in \MMM$, so that $\# \Zpoints_\MMM = \# \MMM = 13$. The coordinates of $\zzz_X$ where $X \in \MMM$ is the covector coming from the bolded face in Figure~\ref{fig:four-lines} are as follows.
\[
\begin{tabular}{cccccccccccc}
    $(1,+)$ & $(1,-)$ & $(1,0)$ & $(2,+)$ & $(2,-)$ & $(2,0)$ & $(3,+)$ & $(3,-)$ & $(3,0)$ & $(4,+)$ & $(4,-)$ & $(4,0)$ \cr \hline
    0 & 0 & 1 & 1 & 0 & 0 & 0 & 1 & 0 & 1 & 0 & 0 
\end{tabular}
\]

 We write the coordinate ring of $\FF^{\III \times \{+,-,0\}}$ as 
\[
    \widehat{S} := \FF[y_i^+,y_i^-,z_i \,:\, i \in \III]
\]
so that the ideals $\II(\Zpoints_\MMM)$ and $\gr \, \II(\Zpoints_\MMM)$ live in $\widehat{S}$. The main goal of this section is to understand the structure of $\RRR(\Zpoints_\MMM) = \widehat{S}/\gr \, \II(\Zpoints_\MMM).$

\subsection{The big VG ring} We define the {\em big Varchenko-Gelfand ring} of $\MMM$ to be the algebra $\widehat{\VG}_\MMM$ of functions $\MMM \to \FF$ with pointwise operations. This algebra was considered in the affine case by Varchenko and Gelfand \cite{VG}; see also \cite{GR}. 

As with the `small' VG ring $\VG_\MMM$, we have the  Heaviside functions $\{h_i^+, h_i^- \,:\, i \in \III\}$ defined by 
\[
h_i^+(X) = \begin{cases}
    1 & X(i) = + \\
    0 & X(i) = - \text{ or } 0
\end{cases} \quad \quad \quad 
h_i^-(X) = \begin{cases}
    1 & X(i) = - \\
    0 & X(i) = + \text{ or } 0.
\end{cases}
\]
Observing that the indicator function $h^0_i(X)$ for whether $X(i)=0$ satisfies \[h_i^0 = 1 - h_i^+ - h_i^-,\] we see that the set $\{h_i^\pm \,:\, i \in \III \}$ generates $\widehat{\VG}_\MMM$ as an algebra.
This gives rise to a filtration $F_0 \subseteq F_1 \subseteq \cdots \subseteq \widehat{VG}_\MMM$ where
\begin{equation}
    F_k := \{ \text{polynomials in $h_i^\pm$ of degree $\leq k$} \} \subseteq \widehat{\VG}_\MMM.
\end{equation}
The {\em graded big Varchenko-Gelfand ring} $\widehat{\VVV}_\MMM$ is  the associated graded ring
\begin{equation}
    \widehat{\VVV}_\MMM := \bigoplus_{k \geq 0} F_k/F_{k-1}
\end{equation}
where $F_{-1} := 0$. Both $\widehat{\VG}_\MMM$ and $\widehat{\VVV}_\MMM$ have orbit harmonics interpretations.

\begin{proposition}
    \label{prop:big-orbit-harmonics-interpretations}
    The assignments $y_i^+ \mapsto h_i^+, y_i^- \mapsto h_i^-,$ and $z_i \mapsto 1 - h_i^+ - h_i^-$ give an identification
    \begin{equation}
    \FF[\Zpoints_\MMM] = \widehat{S}/\II(\Zpoints_\MMM) \cong \widehat{\VG}_\MMM.
    \end{equation}
    This induces the identification of associated graded rings 
    \begin{equation}
        \RRR(\Zpoints_\MMM) = \widehat{S}/\gr \, \II(\Zpoints_\MMM) \cong \widehat{\VVV}_\MMM.
    \end{equation}
\end{proposition}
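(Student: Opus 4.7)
The plan is to run essentially the same argument used for Proposition~\ref{prop:small-locus-identification}, enlarged only by the need to handle the coordinate $z_i$, which will correspond to the indicator $h_i^0 = 1 - h_i^+ - h_i^-$. First I would observe that $X \mapsto \zzz_X$ is a bijection $\MMM \xrightarrow{\,\sim\,} \Zpoints_\MMM$ (a signed set on $\III$ is recovered from its $\{0,1\}$-indicator on $\III \times \{+,-,0\}$). Pullback along this bijection gives an isomorphism of $\FF$-algebras $\FF[\Zpoints_\MMM] \cong \FF^{\MMM} = \widehat{\VG}_\MMM$, each carrying pointwise operations. Under this identification the coordinate function $y_i^+$ evaluates on $\zzz_X$ as $(\zzz_X)_{i,+}$, which is precisely the Heaviside value $h_i^+(X)$; similarly $y_i^- \leftrightarrow h_i^-$ and $z_i \leftrightarrow h_i^0 = 1 - h_i^+ - h_i^-$. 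Combining with the Lagrange interpolation identification $\FF[\Zpoints_\MMM] = \widehat{S}/\II(\Zpoints_\MMM)$ of (\ref{eq:orbit-harmonics-preliminary}) yields the first asserted isomorphism.

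For the graded statement, let $F_k(\widehat{S})$ denote the subspace of polynomials of degree $\leq k$ in $\widehat{S}$ and let $\Phi: \widehat{S} \twoheadrightarrow \widehat{\VG}_\MMM$ denote the composition constructed above. Each of the generators $y_i^+, y_i^-, z_i$ of $\widehat{S}$ is mapped into $F_1(\widehat{\VG}_\MMM)$, so $\Phi(F_k(\widehat{S})) \subseteq F_k(\widehat{\VG}_\MMM)$ for every $k$. Conversely, by definition every element of $F_k(\widehat{\VG}_\MMM)$ is a polynomial of degree $\leq k$ in the $h_i^\pm$, hence equals $\Phi(p)$ for some $p \in F_k(\widehat{S})$. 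Thus the degree filtration on $\widehat{S}$ descends to a filtration on $\widehat{S}/\II(\Zpoints_\MMM)$ that corresponds, via $\Phi$, exactly to the Heaviside filtration on $\widehat{\VG}_\MMM$. Taking associated graded rings on both sides and applying the standard identity $\gr(\widehat{S}/\II) = \widehat{S}/\gr\,\II$ for a filtered quotient of a graded ring, together with the definitions $\RRR(\Zpoints_\MMM) = \widehat{S}/\gr\,\II(\Zpoints_\MMM)$ and $\widehat{\VVV}_\MMM = \bigoplus_k F_k/F_{k-1}$, gives the second isomorphism.

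There is no real obstacle here; the content is the bijection $\MMM \leftrightarrow \Zpoints_\MMM$ together with the fact that all three families of generators of $\widehat{S}$ land in Heaviside-degree at most one. The only mildly subtle point is the asymmetry $z_i \mapsto 1 - h_i^+ - h_i^-$, which is degree one (not zero) in the Heaviside filtration; this is precisely the feature that lets the $z_i$ contribute nontrivially to higher filtration pieces and thereby encode the \emph{face} (rather than merely tope) information that distinguishes $\widehat{\VVV}_\MMM$ from $\VVV_\MMM$.
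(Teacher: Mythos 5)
Your proposal is correct and follows essentially the same route as the paper: identify $\FF[\Zpoints_\MMM]$ with $\widehat{\VG}_\MMM$ via the bijection $X \mapsto \zzz_X$ matching coordinate functions with Heaviside functions, then observe that the Heaviside filtration on $\widehat{\VG}_\MMM$ agrees with the image of the degree filtration on $\widehat{S}$ (with $\deg y_i^\pm = \deg z_i = 1$), so the associated graded rings coincide. Your extra remarks on the two-sided filtration comparison and the identity $\gr(\widehat{S}/\II) = \widehat{S}/\gr\,\II$ simply spell out details the paper leaves implicit.
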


\begin{proof}
    Points $\zzz_X$ in the big locus $\Zpoints_\MMM$ are indexed by covectors $X \in \MMM$. The first isomorphism follows by considering how the coordinate functions $y_i^+,y_i^-,z_i$ evaluate on these covectors. In order to prove the second isomorphism, we observe that the filtration defining $\widehat{\VVV}_\MMM$ satisfies
    \[
    F_k = \{ \text{polynomials in $h_i^\pm, (1 - h_i^+ - h_i^-)$ of degree $\leq k$} \} \subseteq \widehat{\VG}_\MMM
    \]
    which agrees with the filtration on $\widehat{S}$ with $\deg(y_i^\pm) = \deg(z_i) = 1$ defining $\RRR(\Zpoints_\MMM)$.
\end{proof}

In this section we describe generating sets for the ideals $\II(\Zpoints_\MMM)$ and $\gr \, \II(\Zpoints_\MMM)$. The quotient ring $\RRR(\Zpoints_\MMM)$  admits an interesting filtration indexed by the poset of flats $\LLL(\MMM)$. We digress to state some general results on filtrations indexed by posets.

\subsection{$P$-filtrations} Let $P$ be a finite poset with a unique minimum $\hat{0}$ and $V$ be a finite-dimensional $\FF$-vector space. A {\em $P$-filtration} of $V$ consists of a subspace $V_x \subseteq V$ for each $x \in P$ such that $V_y \subseteq V_x$ whenever $x \leq y$. (Note the reversal of containment of subspaces and order in $P$.) We require that $V_{\hat{0}} = V$. 
 If $V$ is a graded vector space, the $P$-filtration $\{V_x \,:\, x \in P \}$ is {\em graded} if each $V_x$ is a graded subspace of $V$.

Given any $P$-filtration of $V$ and any $x \in P$, we have the subspaces
\[
V_{\geq x} := \sum_{y \geq x} V_y = V_x \quad \text{and} \quad V_{> x} := \sum_{y > x} V_y
\]
together with the quotient space
\[
V_{= x} := V_{\geq x}/V_{> x}.
\]
In the following result, for polynomials $f(q),g(q) \in \ZZ_{\geq 0}[q]$  we write $f(q) \leq g(q)$ to mean that $g(q) - f(q)$ has nonnegative coefficients. 

\begin{lemma}
    \label{lem:P-filtration}
    Suppose $\{V_x \,:\, x \in P\}$ is a $P$-filtration of $V$. We have the dimension inequality
    \begin{equation}
        \dim V \leq \sum_{x \in P} \dim(V_{=x}).
    \end{equation}
    Furthermore, if $V$ is a graded vector space and $\{V_x \,:\, x \in P\}$ is a graded $P$-filtration, we have 
    \begin{equation}
        \Hilb(V;q) \leq \sum_{x \in P} \Hilb(V_{=x};q).
    \end{equation}
\end{lemma}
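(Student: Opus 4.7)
The plan is to reduce the statement to a standard fact about total filtrations of a vector space by choosing a compatible linear extension of the poset $P$. Let $x_1, x_2, \ldots, x_n$ be a linear extension of $P$, meaning a total order on the elements of $P$ such that $x_1 = \hat{0}$ and $x_i \leq x_j$ in $P$ forces $i \leq j$. For each $k \in \{1,\dots,n+1\}$ I would then define
\[
W_k := \sum_{j \geq k} V_{x_j},
\]
with the convention $W_{n+1} = 0$. Since $V_{\hat{0}} = V$, we get $W_1 = V$, and by construction $W_1 \supseteq W_2 \supseteq \cdots \supseteq W_{n+1} = 0$ is a decreasing chain, so
\[
\dim V = \sum_{k=1}^{n} \dim(W_k/W_{k+1}).
\]

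The key step is then to show $\dim(W_k/W_{k+1}) \leq \dim(V_{=x_k})$ for each $k$. I would argue as follows: from $W_k = V_{x_k} + W_{k+1}$ we have $W_k/W_{k+1} \cong V_{x_k}/(V_{x_k} \cap W_{k+1})$. Now if $y > x_k$ in $P$, then by the linear extension property $y = x_j$ for some $j > k$, so $V_y \subseteq W_{k+1}$. Consequently $V_{>x_k} = \sum_{y > x_k} V_y \subseteq V_{x_k} \cap W_{k+1}$, and this containment induces a surjection
\[
V_{=x_k} = V_{x_k}/V_{>x_k} \twoheadrightarrow V_{x_k}/(V_{x_k} \cap W_{k+1}) \cong W_k/W_{k+1}.
\]
Summing over $k$ then yields the desired dimension inequality.

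For the graded refinement, I would note that if the filtration is graded, then each $V_{x_j}$ is a graded subspace, hence so is each $W_k$. The isomorphism $W_k/W_{k+1} \cong V_{x_k}/(V_{x_k} \cap W_{k+1})$ is an isomorphism of graded vector spaces and the surjection from $V_{=x_k}$ respects the grading, so the identical argument shows the coefficientwise inequality of Hilbert series
\[
\Hilb(W_k/W_{k+1};q) \leq \Hilb(V_{=x_k};q),
\]
and summing the telescoping identity $\Hilb(V;q) = \sum_k \Hilb(W_k/W_{k+1};q)$ yields the claim.

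I do not anticipate a real obstacle: the entire content is in choosing a linear extension so that the $V_y$ for $y > x_k$ all lie in the tail $W_{k+1}$, after which everything reduces to the standard fact that passing from a filtration to the direct sum of subquotients can only increase (or preserve) dimensions and Hilbert series.
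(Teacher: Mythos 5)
Your proof is correct and takes essentially the same approach as the paper: both choose a linear extension of $P$, define the descending chain $W_k := \sum_{j \geq k} V_{x_j}$, and obtain a surjection $V_{=x_k} \twoheadrightarrow W_k/W_{k+1}$ by observing that every $V_y$ with $y > x_k$ lands in $W_{k+1}$. Your write-up spells out the second isomorphism theorem step a bit more explicitly, but the argument is the same.
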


\begin{proof}
    Assume that $P$ has $n$ elements and let $f: P \to [n]$ be an order-preserving bijection where $[n] := \{1,\dots,n\}$ has its usual order. We necessarily have $f(\hat{0})= 1$. Define a descending chain of subspaces $W_1 \supseteq \cdots \supseteq W_n$ of $V$ by
    \begin{equation}
    W_i := V_{f^{-1}(i)} + V_{f^{-1}(i+1)} + \cdots + V_{f^{-1}(n)}.
    \end{equation}
    Since $f: P \to [n]$ is order-preserving, for all $x \in P$ we have a canonical surjection
    \begin{equation}
    \label{eq:canonical-surjection}
    V_{=x} = V_{\geq x}/V_{> x} \twoheadrightarrow W_{f(x)}/W_{f(x)+1} \quad \text{for $1 \leq i \leq n$}
    \end{equation}
    where $W_{n+1} := 0$ so that
    \begin{equation}
    \sum_{i=1}^n \dim(W_i/W_{i+1}) \leq \sum_{x \in P} \dim V_{=x}.
    \end{equation}
    The assumption $V_{\hat{0}} = P$ implies $W_1 = V$ so that $\sum_{i=1}^n \dim(W_i/W_{i+1}) = \dim V$ and the ungraded statement is proven. For the graded statement, observe that the canonical maps in \eqref{eq:canonical-surjection} are homogeneous.
\end{proof}

Inequality can occur in Lemma~\ref{lem:P-filtration}. For example, let $V = \FF^2$ and consider the $P$-filtration shown below.
\begin{center}
    \begin{tikzpicture}[scale = 1]
        \node at (0,0) {$\FF^2$};
        \node at (-2,1) {$\FF \cdot (1,0)$};
        \node at (2,1) {$\FF \cdot (0,1)$};
        \node at (0,1) {$\FF \cdot (1,1)$};

        \draw [-] (0,0.2) -- (0,0.7);
        \draw [-] (0.3,0.2) -- (1.3,0.8);
        \draw [-] (-0.3,0.2) -- (-1.3,0.8);
    \end{tikzpicture}
\end{center}
In the cases considered in this paper, we will have $P = \LLL(\MMM)$ for some conditional oriented matroid $\MMM$ and equality will occur when applying Lemma~\ref{lem:P-filtration}.

When equality holds in Lemma~\ref{lem:P-filtration}, we have an equivariant enhancement. Suppose the vector space $V$ carries the linear action of a finite group $G$ and let $\{ V_x \,:\, x \in P \}$ be a $P$-filtration of $V$. Suppose the group $G$ also acts on the poset $P$ by order-preserving automorphisms, and these actions are compatible in the sense that 
\begin{equation}
g \cdot V_x = V_{g \cdot x} \quad \text{ for all $g \in G$ and $x \in P$.}
\end{equation}
In this setting we say that the $P$-filtration $\{V_x \,:\, x \in P \}$ is {\em $G$-equivariant}.
For any $x \in P$, let $G_x \subseteq G$ be the stabilizer subgroup
\begin{equation}
    G_x := \{ g \in G \,:\, g \cdot x = x \} .
\end{equation}
Since the action of $G$ on $P$ is order-preserving, the vector spaces $V_x, V_{\geq x}, V_{> x},$ and $V_{= x}$ all carry actions of the subgroup $G_x$. In the following lemma we write $P/G$ for the quotient poset of $G$-orbits in $P$ and $\Ind_{G_x}^G(-)$ for the induction functor from $G_x$-modules to $G$-modules.

\begin{lemma}
    \label{lem:P-filtration-equivariant}
    Let $\{V_x \,:\, x \in P \}$ be a $G$-equivariant $P$-filtration of $V$ where $\# G \in \FF^\times$. Assume that 
    \begin{equation}
        \dim V = \sum_{x \in P} \dim V_{=x}.
    \end{equation}
    There exists an isomorphism of $G$-modules 
    \begin{equation}
    \label{eq:general-character-equality}
        V \cong_G \bigoplus_{[x] \in P/G} \Ind_{G_x}^G (V_{=x}).
    \end{equation}
    If $V$ is a graded vector space and the $P$-filtration is graded, this may be taken to be an isomorphism of graded $G$-modules.
\end{lemma}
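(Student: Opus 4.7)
The plan is to mimic the construction from the proof of Lemma~\ref{lem:P-filtration} but organize it around $G$-orbits in $P$ rather than individual elements. First I fix a linear extension $[y_1], [y_2], \ldots, [y_k]$ of the quotient poset $P/G$ (so that $[y_i] \leq [y_j]$ in $P/G$ forces $i \leq j$), and for each $1 \leq i \leq k$ set $A_i := [y_i] \cup [y_{i+1}] \cup \cdots \cup [y_k] \subseteq P$. Because $G$ acts on $P$ by order-preserving automorphisms, each $A_i$ is a $G$-stable up-set of $P$, so the subspaces $W_i := \sum_{x \in A_i} V_x$ form a descending chain $V = W_1 \supseteq W_2 \supseteq \cdots \supseteq W_{k+1} = 0$ of $G$-stable subspaces of $V$ (with $W_1 = V_{\hat{0}} = V$).

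Next I construct $G$-equivariant surjections $\Ind_{G_{y_i}}^G(V_{=y_i}) \twoheadrightarrow W_i/W_{i+1}$. The inclusion $V_{y_i} \hookrightarrow W_i$ followed by projection is $G_{y_i}$-equivariant, and the crucial step is that it factors through $V_{=y_i} = V_{y_i}/V_{>y_i}$. For this I must verify $V_{>y_i} \subseteq W_{i+1}$, which reduces to showing that $z > y_i$ in $P$ implies $[z] > [y_i]$ strictly in $P/G$. The inequality $[z] \geq [y_i]$ is immediate; and if equality held then $z = g \cdot y_i$ with $g \cdot y_i > y_i$, so iterating the order-preserving action would yield $y_i < g \cdot y_i < g^2 \cdot y_i < \cdots < g^n \cdot y_i = y_i$ where $n$ is the order of $g$, a contradiction. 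Hence $[z] = [y_j]$ for some $j > i$, so $V_z \subseteq W_{i+1}$. Frobenius reciprocity then promotes the factored map to the required $G$-equivariant map, whose image contains $g \cdot V_{y_i} = V_{g \cdot y_i}$ for all $g \in G/G_{y_i}$ and is therefore all of $W_i/W_{i+1}$.

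A dimension count will force these surjections to be isomorphisms. Since $\dim V_{=x}$ is constant on $G$-orbits, $\dim \Ind_{G_{y_i}}^G(V_{=y_i}) = [G:G_{y_i}] \cdot \dim V_{=y_i} = \sum_{x \in [y_i]} \dim V_{=x}$. Summing over $i$, the hypothesis $\dim V = \sum_{x \in P} \dim V_{=x}$ combined with the telescoping identity $\dim V = \sum_i \dim(W_i/W_{i+1})$ forces equality in each surjection.

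Finally, since $\#G \in \FF^\times$, Maschke's theorem splits the $G$-module filtration $0 \subseteq W_k \subseteq \cdots \subseteq W_1 = V$, giving
\[
V \cong_G \bigoplus_{i=1}^k W_i/W_{i+1} \cong_G \bigoplus_{[x] \in P/G} \Ind_{G_x}^G(V_{=x}).
\]
In the graded setting each $W_i$ is automatically a graded subspace, the maps constructed above are homogeneous, and Maschke applied in each degree produces a splitting of graded $G$-modules. I expect the main obstacle to be the verification that $V_{>y_i} \subseteq W_{i+1}$ via the strict-inequality argument in the quotient poset; everything else is standard representation-theoretic bookkeeping.
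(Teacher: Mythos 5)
Your proof is correct, but it takes a genuinely different route from the paper's. The paper picks, for each $x\in P$, a subset $\BBB_x\subseteq V_x$ that descends to a basis of $V_{=x}$, chosen so that $g\cdot\BBB_x=\BBB_{g\cdot x}$; the dimension hypothesis then forces $\bigsqcup_x\BBB_x$ to be a basis of $V$, the representing matrices of $G$ are block upper-triangular with respect to the orbit stratification, the diagonal blocks are visibly the induced representations, and the two sides of the claimed isomorphism therefore have equal characters, which suffices since $\#G\in\FF^\times$. You instead build a filtration $W_1\supseteq\cdots\supseteq W_{k+1}=0$ by $G$-stable subspaces indexed by a linear extension of $P/G$, prove the key inclusion $V_{>y_i}\subseteq W_{i+1}$ (your strict-inequality argument in $P/G$, using finiteness of $G$, is exactly right), and then construct explicit $G$-module surjections $\Ind_{G_{y_i}}^G(V_{=y_i})\twoheadrightarrow W_i/W_{i+1}$ via Frobenius reciprocity, which the dimension count forces to be isomorphisms; Maschke splits the filtration. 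The paper's approach is shorter and leans on character theory; yours is more constructive and produces the isomorphisms directly rather than inferring them from a character identity. Both hinge on Maschke in the final step and both yield the Brauer-isomorphism remark for free (equal composition factors are visible from the filtration in either argument). One cosmetic point: you should state explicitly that $\dim V_{=x}$ is constant on $G$-orbits because $g\cdot V_{>x}=V_{>g\cdot x}$ (an order-preserving automorphism sends the ideal $\{y>x\}$ to $\{y>g\cdot x\}$), so $g$ induces an isomorphism $V_{=x}\xrightarrow{\sim}V_{=g\cdot x}$; you use this silently when writing $[G:G_{y_i}]\dim V_{=y_i}=\sum_{x\in[y_i]}\dim V_{=x}$.
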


\begin{proof}
    For each $x \in P$, let $\BBB_x \subseteq V_x$ be a set of polynomials which descends to a vector space basis of $V_{=x} = V_x/V_{> x}$. Choose these polynomials in such a way that $g \cdot \BBB_x = \BBB_{g \cdot x}$ for all $g \in G$ and $x \in P$.\footnote {This may be done by choosing such a set $\BBB_x \subseteq V_x$ for $x$ belonging to a transversal of the orbit set $G/P$, and then defining $\BBB_{g \cdot x} := g \cdot \BBB_x$ for all $g \in G$.} The assumption $\dim V = \sum_{x \in P} \dim V_{=x}$ implies that the union $\BBB := \bigsqcup_{x \in X} \BBB_x$ is disjoint and that $\BBB$ is a basis of $V$.

    Consider the representing matrices for the action of $G$ on $V$ with respect to the basis $\BBB$. We have a stratification 
    \[
    \BBB = \bigsqcup_{[x] \in P/G} \BBB_{[x]} \quad \text{where} \quad \BBB_{[x]} := \bigsqcup_{x' \in [x]} \BBB_{x'}
    \]
    of $\BBB$ indexed by the quotient poset $P/G$. The action of $G$ on $V$ triangular with respect to this stratification and the partial order on $P/G$. The diagonal block corresponding to $\BBB_{[x]}$ affords the induced representation $\Ind_{G_x}^G(V_{=x})$. It follows that the representations on either side of \eqref{eq:general-character-equality} have the same character. Since $\# G$ is a unit in $\FF$, these representations are isomorphic. In the graded setting, we repeat this argument with the assumption that the sets $\BBB_x$ are homogeneous.
\end{proof}

When $\# G$ is not a unit in $\FF$, Lemma~\ref{lem:P-filtration-equivariant} holds with the weaker conclusion that the $G$-modules $V$ and $\bigoplus_{[x] \in P/G} \Ind_{G_x}^G(V_{=x})$ are {\em Brauer isomorphic}. That is, these modules have the same composition factors.

\subsection{Middle quotients for the big locus} 
The ring $\RRR(\Zpoints_\MMM)$ admits a useful $\LLL(\MMM)$-filtration where $\LLL(\MMM)$ is the poset of flats of $\MMM$.
To describe this filtration, we introduce an intermediate family of quotient rings
\[
\widehat{S} \twoheadrightarrow \widehat{S}/\widetilde{I}_\MMM \twoheadrightarrow \RRR(\Zpoints_\MMM).
\]
The generators for the ideal $\widetilde{I}_\MMM$ defining our middle quotient rings are as follows. As the main purpose of these middle rings is to introduce $\LLL(\MMM)$-filtrations, these generators only involve the variables $\{z_i \,:\, i \in \III \}$.

\begin{defn}
\label{def:tilde-ideal}
Let $\MMM$ be a conditional oriented matroid on $\III$. Define $\widetilde{I}_\MMM \subseteq \widehat{S}$ to be the ideal with the following generators.
\begin{enumerate}
    \item The product $\prod_{c \in C} z_c$ for any nonbasic set $C \subseteq \III$.
    \item The difference $\prod_{b \in B} z_b - \prod_{b' \in B'} z_{b'}$ for any two basic sets $B,B'$ for the same flat $F$.
    \item The element $z_i^2$ for each $i \in \III$.
\end{enumerate}
\end{defn}

\begin{remark}
    \label{rmk:HW}
    The ideals $\widetilde{I}_\MMM$ arise in geometry. Let $L \subseteq \CC^n$ be a linear subspace. Ardila and Boocher introduced \cite{AB} the {\em matroid Schubert variety} as the Zariski closure $Y_L \subseteq (\PP^1)^n$ of the image of 
    \[
    L \hookrightarrow \CC^n \hookrightarrow (\PP^1)^n.
    \]
    Intersecting the coordinate hyperplanes of $\CC^n$ with $L$ gives rise to a matroid $\MMM$ on the ground set $\III = [n]$. Let $z_i \in H^2(Y_L)$ be the first Chern class of the tautological line bundle over the $i^{th}$ factor in $(\PP^1)^n$, pulled back along $Y_L \hookrightarrow (\PP^1)^n.$
    Huh and Wang proved \cite{HW} that the cohomology ring $H^*(Y_L)$ is generated by $z_1,\dots,z_n$ with relations as in Definition~\ref{def:tilde-ideal}. We may therefore regard the quotient $\widehat{S}/\widetilde{I}_\MMM$ as the cohomology of $Y_L$ with coefficients in $\FF[y_i^\pm \,:\, i \in \III]$.
\end{remark}

In the example of Figure~\ref{fig:four-lines}, we have $\III = \{1,2,3,4\}$ so that
\[ \widehat{S} = \FF[y_1^+,y_1^-,z_1,\dots, y_4^+,y_4^-,z_4].\]
The minimal nonbasic sets are $\{1,2,3\}$ and $\{4\}$.
The ideal $\widetilde{I}_\MMM \subseteq \widehat{S}$ is given by
\[ \widetilde{I}_\MMM = (z_1 \cdot z_2 \cdot z_3, \,  z_4,  \,  z_1 \cdot z_2 - z_1 \cdot z_3, \,   z_1 \cdot z_2 - z_2 \cdot z_3, \,   z_1 \cdot z_3 - z_2 \cdot z_3, \, z_1^2, \, z_2^2, \, z_3^2, \, z_4^2).\]

Lemma~\ref{lem:basic-lemma} (1) implies that  $\widetilde{I}_\MMM$ is a homogeneous ideal in $\widehat{S}$. The following result gives the desired sequence $\widehat{S} \twoheadrightarrow \widehat{S}/\widetilde{I}_\MMM \twoheadrightarrow \RRR(\Zpoints_\MMM)$ of canonical surjections.

\begin{lemma}
    \label{lem:tilde-ideal-containment}
    We have the containment $\widetilde{I}_\MMM \subseteq \gr \, \II(\Zpoints_\MMM)$ of ideals in $\widehat{S}$.
\end{lemma}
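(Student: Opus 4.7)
The plan is to handle the two families of generators of $\widetilde{I}_\MMM$ from Definition~\ref{def:tilde-ideal} separately. In each case the goal is to exhibit a polynomial $f \in \II(\Zpoints_\MMM)$ whose top-degree homogeneous component $\tau(f)$ equals the generator; since $\tau(f)$ lies in $\gr \, \II(\Zpoints_\MMM)$ by definition, this will suffice.

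First I would establish the key evaluation principle: for any subset $C \subseteq \III$ and any covector $X \in \MMM$, the monomial $\prod_{c \in C} z_c$ evaluates on $\zzz_X$ to $1$ if $C \subseteq \Flat(X)$ and to $0$ otherwise. Combined with Lemma~\ref{lem:flat-semilattice}, whenever $C$ is contained in at least one flat and $F := \overline{C}$ denotes the unique minimal flat containing $C$, this monomial agrees on $\Zpoints_\MMM$ with the indicator function of $\{X \in \MMM : \Flat(X) \supseteq F\}$. The point is that $\Flat(X) \cap F$ is again a flat containing $C$, and so minimality of $\overline{C}$ forces $\Flat(X) \supseteq F$ whenever $\Flat(X) \supseteq C$.

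For the first family, let $C$ be nonbasic. I would split into two subcases. If $C$ is contained in no flat of $\MMM$, then by the key principle $\prod_{c \in C} z_c$ already vanishes on $\Zpoints_\MMM$; being homogeneous it is its own top-degree part and the containment is immediate. Otherwise, the definition of nonbasic furnishes a proper subset $C' \subsetneq C$ with $\overline{C'} = \overline{C}$, and the key principle shows that $\prod_{c \in C} z_c$ and $\prod_{c' \in C'} z_{c'}$ have identical evaluations on $\Zpoints_\MMM$; their difference lies in $\II(\Zpoints_\MMM)$ and has top-degree component $\prod_{c \in C} z_c$ since $|C| > |C'|$.

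For the second family, given two basic sets $B, B'$ for the same flat $F$, the key principle again yields that $\prod_{b \in B} z_b$ and $\prod_{b' \in B'} z_{b'}$ have identical evaluations on $\Zpoints_\MMM$, so their difference lies in $\II(\Zpoints_\MMM)$; by Lemma~\ref{lem:basic-lemma}(1) we have $|B| = |B'|$, so this difference is homogeneous and equals its own top-degree component. I do not expect any real obstacle: the whole argument is a translation between the combinatorics of basic sets and the vanishing behavior of pure $z$-monomials on the big locus, with Lemma~\ref{lem:flat-semilattice} and Lemma~\ref{lem:basic-lemma} doing the essential work.
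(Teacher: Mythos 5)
Your proposal is correct and follows essentially the same route as the paper's proof: split into the two generator families, compare each $z$-monomial to one indexed by a smaller set with the same closure (or observe it already vanishes), and take top-degree parts. The only cosmetic differences are that you make the evaluation principle $\prod_{c\in C} z_c(\zzz_X)=1 \iff \overline{C}\subseteq\Flat(X)$ explicit (the paper uses this implicitly), and in case (1) you take any proper subset $C'\subsetneq C$ with $\overline{C'}=\overline{C}$ rather than specifically a basic one; both are fine.
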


\begin{proof}
    We check that the three kinds of generators in Definition~\ref{def:tilde-ideal} lie in $\gr \, \II(\Zpoints_\MMM)$.

    (1) Let $C \subseteq \III$ be a nonbasic set. If $C\not\subseteq F$ for all flats $F \in \LLL(\MMM)$, one has the membership
    \[
    \prod_{c \in C} z_c \in \II(\Zpoints_\MMM) \quad \text{so that } \quad \prod_{c \in C} z_c \in \gr \, \II(\Zpoints_\MMM).
    \]
    Otherwise, let $F$ be the containment-minimal flat in $\LLL(\MMM)$ so that $C \subseteq F$. Let $B \subsetneq C$ be a (necessarily proper) subset of $C$ such that $C$ is basic for $F$. We claim that  
    \[
    \prod_{c \in C} z_c - \prod_{b \in B} z_b \in \II(\Zpoints_\MMM).
    \]
    Indeed, if $X \in \MMM$ is a covector, the above difference evaluates on $\zzz_X$ to $1-1=0$ when $F \subseteq \Flat(X)$ and $0-0 = 0$ when $F \not\subseteq \Flat(X)$.
    Since $\# C > \# B$, taking the highest degree component gives the desired membership $\prod_{c \in C} z_c \in \gr \, \II(\Zpoints_\MMM)$.

    (2) Suppose $B$ and $B'$ are basic for the same flat $F \in \LLL(\MMM)$. By the same reasoning as in (1) one has 
    \[
    \prod_{b \in B} z_b - \prod_{b \in B'} z_{b'} \in \II(\Zpoints_\MMM).
    \]
    Lemma~\ref{lem:basic-lemma} (1) guarantees that the polynomial $\prod_{b \in \BBB} z_b - \prod_{b \in \BBB'} z_{b'} \in \hat{S}$ is homogeneous, and so lies in $\gr \, \II(\Zpoints_\MMM)$.

    (3) For any $i \in \III$ we have $z_i(z_i-1) \in \II(\Zpoints_\MMM)$ so that $z_i^2 \in \gr \, \II(\Zpoints_\MMM)$.
\end{proof}

 We introduce the following family of elements of $\widehat{S}/\widetilde{I}_\MMM$  indexed by flats in $\LLL(\MMM)$. These elements will be used to define our $\LLL(\MMM)$-filtrations.

\begin{defn}
    \label{def:flat-elements}
    Let $\MMM$ be a conditional oriented matroid on $\III$ and let $F \in \LLL(\MMM)$ be a flat of $\MMM$. We define an element $z_F \in \tilde{R}_\MMM$ by 
    \[
    z_F := \prod_{b \in B} z_b + \widetilde{I}_\MMM \in \widehat{S}/\widetilde{I}_\MMM.
    \]
    where $B$ is any basic set for $F$.
\end{defn}

If $\MMM$ is as in Figure~\ref{fig:four-lines} and $F = \{1,2,3\} \in \LLL(\MMM)$, one has the element
\[
z_F = z_1 \cdot z_2 + \widetilde{I}_\MMM = z_1 \cdot z_3 + \widetilde{I}_\MMM = z_2 \cdot z_3 + \widetilde{I}_\MMM \in \widehat{S}/\widetilde{I}_\MMM.
\]
Lemma~\ref{lem:basic-lemma} (1) and Definition~\ref{def:tilde-ideal} (2) imply that $z_F$ is well-defined in general (i.e. independent of the choice of $B$). The following divisibility property of the $z_F$ will give rise to $\LLL(\MMM)$-filtrations.

\begin{lemma}
\label{lem:z-divisibility}
Suppose $F_1 \subseteq F_2$ are flats of $\MMM$. We have the divisibility $z_{F_1} \mid z_{F_2}$ in $\widehat{S}/\widetilde{I}_\MMM$.
\end{lemma}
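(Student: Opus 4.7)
The plan is to exploit the compatibility of basic sets under containment of flats provided by Lemma~\ref{lem:basic-lemma}~(2), together with the already-established fact that $z_F$ does not depend on the choice of basic set used to represent it.

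Concretely, I would first apply Lemma~\ref{lem:basic-lemma}~(2) to the flats $F_1 \subseteq F_2$ to produce a pair of basic sets $B_1$ for $F_1$ and $B_2$ for $F_2$ with the containment $B_1 \subseteq B_2$. Using these particular basic sets to evaluate Definition~\ref{def:flat-elements}, I would write
\[
z_{F_1} = \prod_{b \in B_1} z_b + \widetilde{I}_\MMM \quad \text{and} \quad z_{F_2} = \prod_{b \in B_2} z_b + \widetilde{I}_\MMM.
\]
Because $B_1 \subseteq B_2$, the polynomial identity
\[
\prod_{b \in B_2} z_b = \left( \prod_{b \in B_1} z_b \right) \cdot \prod_{b \in B_2 - B_1} z_b
\]
holds in $\widehat{S}$, and passing to the quotient gives $z_{F_2} = z_{F_1} \cdot \left( \prod_{b \in B_2 - B_1} z_b + \widetilde{I}_\MMM \right)$, which is exactly the divisibility claim $z_{F_1} \mid z_{F_2}$.

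I expect no serious obstacle here: the combinatorial content is entirely absorbed by Lemma~\ref{lem:basic-lemma}~(2), and the well-definedness of $z_F$ from Lemma~\ref{lem:basic-lemma}~(1) together with Definition~\ref{def:tilde-ideal}~(2) is exactly what allows us to choose these particular compatible representatives rather than arbitrary ones. The only thing to be slightly careful about is that for a generic pair of basic sets $B_1'$ for $F_1$ and $B_2'$ for $F_2$ the containment $B_1' \subseteq B_2'$ can fail, so the argument genuinely requires the existence statement in Lemma~\ref{lem:basic-lemma}~(2) rather than just the rank equality from Lemma~\ref{lem:basic-lemma}~(1).
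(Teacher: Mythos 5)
Your proposal is correct and is exactly the argument the paper intends: the paper's proof is the one-line observation that the lemma follows from Lemma~\ref{lem:basic-lemma}~(2) together with Definition~\ref{def:flat-elements}, and your write-up simply makes explicit the choice of nested basic sets $B_1 \subseteq B_2$, the well-definedness of $z_F$, and the resulting factorization. Nothing further is needed.
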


\begin{proof}
    This follows from Lemma~\ref{lem:basic-lemma} (2) and Definition~\ref{def:flat-elements}.
\end{proof}

\subsection{The big orbit harmonics quotient} In this subsection we give a generating set of the ideal $\gr \, \II(\Zpoints_\MMM) \subseteq \hat{S}$ coming from the big locus $\Zpoints_\MMM$.  To do this, we need a lemma about the values functions in $\widehat{S}$ take on points in $\Zpoints_\MMM$ in the presence of symmetric circuits.

\begin{lemma}
    \label{lem:two-values-attained-big}
    Let $X: \III \to \{+,-,0\}$ be a symmetric circuit of $\MMM$ whose support has cardinality $s = \# \Supp(X) > 1$. Let $J$ be a nonempty and proper subset of the support of $X$, i.e. 
    \[ \varnothing \subsetneq J \subsetneq \Supp(X).\]
    For $i \in \Supp(X)$, define an element $\tilde{y}_i \in \widehat{S}$ by the rule
    \[
    \tilde{y}_i := \begin{cases}
        y_i^{X(i)} + z_i &  i \in J, \\
        y_i^{X(i)} &  i \in \Supp(X) - J.
    \end{cases}
    \]
    Let $Y \in \MMM$ be a covector. There exist elements $i_0, i_1 \in \Supp(X)$ such that $\tilde{y}_{i_0}(\zzz_Y) = 0$ and $\tilde{y}_{i_1}(\zzz_Y) = 1$.
\end{lemma}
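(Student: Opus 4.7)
The plan is to prove the existence of $i_0$ (where $\tilde y$ vanishes) by contradiction, and then to deduce the existence of $i_1$ (where $\tilde y$ equals $1$) by applying the same argument with $X$ replaced by $-X$ and $J$ replaced by $\Supp(X)-J$. A direct unpacking of definitions shows that ``$\tilde y_i(\zzz_Y) = 1$'' in the setting $(X,J)$ is the exact same condition on $Y$ as ``$\tilde y_i(\zzz_Y) = 0$'' in the setting $(-X, \Supp(X)-J)$; since $-X$ is a circuit (by symmetry of $X$) and $\Supp(X) - J$ is again a nonempty proper subset of $\Supp(-X) = \Supp(X)$, the second existence will follow from the first with no extra work.

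For the first existence, I would assume toward contradiction that $\tilde y_i(\zzz_Y) = 1$ for every $i \in \Supp(X)$. Unpacking forces $Y(i) = X(i)$ on $\Supp(X) - J$ and $Y(i) \in \{X(i), 0\}$ on $J$. Setting $K := \{i \in J : Y(i) = 0\}$, we have $K \subseteq J \subsetneq \Supp(X)$. If $K = \varnothing$, then $Y$ agrees with $X$ throughout $\Supp(X)$, so $X \circ Y = Y$, contradicting property (1) of the circuit $X$. Otherwise, the signed subset $W$ of $X$ supported exactly on $K$ (with $W(k) = X(k)$ for $k \in K$) is a proper signed subset of $X$, so property (2) of circuits supplies a covector $Y_K \in \MMM$ with $Y_K(k) = X(k)$ for all $k \in K$. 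The composition $Y \circ Y_K$ lies in $\MMM$ (by closure of $\MMM$ under composition, as remarked in the excerpt); on $\Supp(X) - K$ it inherits the nonzero $X$-values from $Y$, while on $K$ it inherits the $X$-values of $Y_K$ since $Y$ vanishes there. Thus $Y \circ Y_K$ agrees with $X$ throughout $\Supp(X)$, forcing $X \circ (Y \circ Y_K) = Y \circ Y_K$ in contradiction to property (1).

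The main obstacle is locating the correct proper signed subset of $X$ to feed into property (2): the choice $\Supp(W) = K$, precisely the zero locus of $Y$ inside $\Supp(X)$, is what guarantees that the covector $Y_K$ extracted from property (2), once composed with $Y$, fills in exactly those zeros with the required $X$-values to realize $X$ on its support. Once this choice is spotted, both existences follow cleanly, with the $i_1$ statement requiring no additional combinatorial work beyond the symmetric-circuit duality.
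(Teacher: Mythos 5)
Your proof is correct and rests on the same key idea as the paper's: after assuming all $\tilde y_i(\zzz_Y)=1$, feed the proper signed subset of $X$ supported on the zero locus $K$ of $Y$ inside $\Supp(X)$ into circuit axiom (2), and compose the resulting covector with $Y$ to manufacture a covector agreeing with $X$ on all of $\Supp(X)$, contradicting axiom (1). The paper carries out the dual case (all $\tilde y_i = 0$) explicitly after a WLOG normalization of signs and coordinates; your observation that replacing $(X,J)$ by $(-X,\Supp(X)-J)$ exactly interchanges the two cases is a clean shortcut that eliminates that repetition, but it is a packaging improvement rather than a different argument.
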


\begin{proof}
    Replacing $\MMM$ by its restriction $\MMM \mid_{\Supp(X)}$ to $\Supp(X)$ if necessary, we reduce to the case $\Supp(X) = \III = [n]$. Switching signs and rearranging coordinates if necessary, we assume further that
    \[
    X = (\overbrace{+,\dots,+}^n) \quad \text{and $J = [m]$ for some $0 < m < n$.}
    \]
    Let $Y \in \MMM$ be a covector. We argue by contradiction as follows.

    Assume first that $\tilde{y}_i = 1$ for all $1 \leq i \leq n$. Then $Y$ has the form
    \[
    Y = (c_1,\dots,c_m, \overbrace{+,\dots,+}^{n-m})
    \]
    where $c_1,\dots, c_m \in \{+,0\}$. Rearranging coordinates if needed, we may assume  
    \[
    Y = (\overbrace{0,\dots,0}^a, \overbrace{+,\dots,+}^b, \overbrace{+,\dots,+}^{n-m})
    \]
    where $a + b = m$.
    Let $X': \III \to \{+,-,0\}$ be the signed set
    \[
    X' := (\overbrace{+,\dots,+}^a, \overbrace{0,\dots,0}^{n-a}).
    \]
    Since $m < n$ and $X$ is a circuit, support minimality implies that there exists $Y' \in \MMM$ so that $X' \circ Y' = Y'$. The covector $Y'$ has the form
    \[
    Y' = (\overbrace{+,\dots,+}^a, d_{a+1},\dots,d_n)
    \]
    where $d_i \in \{+,-,0\}$. Since $\MMM$ is closed under composition, we have $Y \circ Y' = (+,\dots,+) \in \MMM$. But then $X \circ (Y \circ Y') = Y \circ Y'$, which contradicts the assumption that $X$ is a circuit.

    Now assume that $\tilde{y}_i = 0$ for all $1 \leq i \leq n$. Rearranging coordinates if necessary as before, we amy assume
    \[
    Y = (\overbrace{-,\dots,-}^m, \overbrace{-,\dots,-}^a, \overbrace{0,\dots,0}^b).
    \]
    where $a+b = n-m$. Since $X$ is symmetric, we know that $-X$ is a circuit. Since $m > 0$, one has $b < n$ and there exists a covector $Y' \in \MMM$ so that 
    \[
    (\overbrace{0,\dots,0}^{n-b}, \overbrace{-,\dots,-}^b) \circ Y' = Y'.
    \]
    The covector $Y' \in \MMM$ has the form
    \[
    Y' = (d_1,\dots,d_{n-b}, \overbrace{-,\dots,-}^b)
    \]
    where $d_i \in \{+,-,0\}$. But then $Y \circ Y' = (-,\dots,-) \in \MMM$ and $(-X) \circ (Y \circ Y') = Y \circ Y'$ so that $-X$ is not a circuit of $\MMM$.
\end{proof}

We are ready to describe a generating set of $\gr \, \II(\Zpoints_\MMM)$. Given a flat $F \in \LLL(\MMM)$, recall that the contraction $\MMM^F$ is the conditional oriented matroid on the ground set $\III - F$ with covectors
\[
\MMM^F = \{ X \mid_{\III-F} \,:\, X \in \MMM \text{ and } X(i) = 0 \text{ for all } i \in F \}.
\]
Also recall the element $z_F \in \widehat{S}/\widetilde{I}_\MMM$ of Definition~\ref{def:flat-elements}.

\begin{defn}
    \label{def:I-ideal}
    Let $\pi: \widehat{S} \twoheadrightarrow \widehat{S}/\widetilde{I}_\MMM$ be the canonical surjection. Define
    $\widehat{I}_\MMM \subseteq \widehat{S}$ to be the full preimage under $\pi$ of the ideal with the following generators.
    \begin{enumerate}
        \item All degree two monomials in the set $\{y_i^+,y_i^-,z_i\}$ for each $i \in \III$.
        \item The sum $y_i^+ + y_i^- + z_i$ for each $i \in \III$.
        \item For each flat $F \in \LLL(\MMM)$ and each $i \in F$, the products
        \[
        z_F \cdot y_i^+ \quad \text{and} \quad z_F \cdot y_i^-.
        \]
        \item For each flat $F \in \LLL(\MMM)$ and each circuit $X: (\III-F) \to \{+,-,0\}$ of $\MMM^F$, the product
        \[
        z_F \cdot \prod_{i \in \Supp(X)} y_i^{X(i)}.
        \]
        \item For each flat $F \in \LLL(\MMM)$ and each symmetric circuit $X: (\III-F) \to \{+,-,0\}$ of $\MMM^F$, the element
        \[
        z_F \cdot e_{s-1}(\tilde{y}_i \,:\, i \in \Supp(X))
        \]
        where $\varnothing \subsetneq J \subsetneq \Supp(X)$ is a fixed nonempty and proper subset of $\Supp(X)$ and $\tilde{y}_i$ is defined using $J$ as in Lemma~\ref{lem:two-values-attained-big}.
    \end{enumerate}
\end{defn}

We make two remarks on Definition~\ref{def:I-ideal} (5). First, since the contraction $\MMM^F$ does not have any coloops, any symmetric circuit $X$ of $\MMM^F$ satisfies $\# \Supp(X) > 1$, so the set $\Supp(X)$ has a nonempty and proper subset $J$. Second, although different choices of $J$ could give rise to different polynomials $z_F \cdot e_{s-1}(\tilde{y}_i \,:\, i \in \Supp(X))$, in Theorem~\ref{thm:big-locus-identification} we will show that the ideal $\widehat{I}_\MMM$ is independent of the choice of $J$ (and in fact equal to $\gr \, \II(\Zpoints_\MMM)$).

\begin{example}
 Let $\MMM$ be the conditional oriented matroid on $\III = \{1,2,3,4\}$ in Figure~\ref{fig:four-lines}. Every flat $F \in \LLL(\MMM)$ gives rise to generators of $\widehat{I}_\MMM$ as in Definition~\ref{def:I-ideal} (3), (4), (5). We calculate these generators when $F = \{2\}$ and $F = \{1,2,3\}$.
 
 Consider the flat $F = \{2\} \in \LLL(\MMM)$. The contraction $\MMM^F$ has ground set $\III - F = \{1,3,4\}$ and consists of the following three covectors.
\[
\begin{tabular}{c | ccc}
 & 1 & 3 & 4 \\ \hline
 $X_1$ & $+$ & $+$ & $+$ \\
 $X_2$ & $0$ & $0$ & $+$ \\
 $X_3$ & $-$ & $-$ & $+$ 
\end{tabular}
\]
The only basic set for $F$ is $\{2\}$, so we have $z_F = z_2$. The generators of $\widehat{I}_\MMM$ coming from (3) are 
$z_2 \cdot y_2^+$ and $z_2 \cdot y_2^-$; these generators are present in (1). The generator of $\widehat{I}_\MMM$ coming from (4) arises from the nonsymmetric circuit $(0,0,-)$ of $\MMM^F$ and is given by $z_2 \cdot y_4^-$; this is implied by the generator $y_4^-$ coming from the flat $\varnothing$. Finally, a generator of $\widehat{I}_\MMM$ coming from (5) arises from the symmetric circuit $X = (+,-,0)$  with $\Supp(X) = \{1,3\}$ and $\JJJ = \{3\}$; this generator is given by
\[
z_2 \cdot (\tilde{y}_1 + \tilde{y}_3) = z_2 \cdot (y_1^+ + (y_3^- + z_3)).
\]

Now consider the flat $F = \{1,2,3\} \in \LLL(\MMM)$. The contraction $\MMM^F$ has ground set $\III - F = \{4\}$ and consists of the single covector $(+)$. The set $\{1,2\}$ is basic for $F$ so that $z_F = z_1 \cdot z_2$. Definition~\ref{def:I-ideal} (3) contributes the generators
\[
z_1 \cdot z_2 \cdot y_1^+, \quad z_1 \cdot z_2 \cdot y_1^-, \quad z_1 \cdot z_2 \cdot y_2^+, \quad z_1 \cdot z_2 \cdot y_2^-, \quad z_1 \cdot z_2 \cdot y_3^+, \quad z_1 \cdot z_2 \cdot y_3^-
\]
of $\widehat{I}_\MMM$. The last two of these generators are not implied by (1). Definition~\ref{def:I-ideal} (4) and the nonsymmetric circuit $(-)$ give the generator $z_1 \cdot z_2 \cdot y_4^-$; this is implied by the generator $y_4^-$ coming from the flat $\varnothing$. Since $\MMM^F$ does not have any symmetric circuits, there are no generators coming from Definition~\ref{def:I-ideal} (5).
\end{example}

The above example shows that there can be redundancy in the generating set of $\widehat{I}_\MMM$. It may be interesting to describe a generating set of $\widehat{I}_\MMM$ with less redundancy. We prove that $\widehat{S}/\widehat{I}_\MMM$ projects onto $\RRR(\Zpoints_\MMM)$.

\begin{lemma}
    \label{lem:big-ideal-containment}
    One has the containment  $\widehat{I}_\MMM \subseteq \gr \, \II(\Zpoints_\MMM)$ of ideals in $\widehat{S}$.
\end{lemma}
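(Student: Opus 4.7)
The plan is to apply Lemma~\ref{lem:tilde-ideal-containment} at the outset: since $\widetilde{I}_\MMM \subseteq \gr \, \II(\Zpoints_\MMM)$, it suffices to exhibit, for each family (1)--(5) of generators of $\widehat{I}_\MMM$ in Definition~\ref{def:I-ideal}, one lift in $\widehat{S}$ lying in $\gr \, \II(\Zpoints_\MMM)$; any other lift differs by an element of $\widetilde{I}_\MMM$. For (3)--(5) I would fix a basic set $B$ for $F$ and lift $z_F$ to $\prod_{b \in B} z_b \in \widehat{S}$.

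Families (1) and (2) are immediate. The three coordinates $y_i^+, y_i^-, z_i$ are indicators of the mutually exclusive events $X(i) \in \{+,-,0\}$, so any degree-two monomial in $\{y_i^+, y_i^-, z_i\}$ vanishes on $\Zpoints_\MMM$ and is already homogeneous, while $y_i^+ + y_i^- + z_i - 1 \in \II(\Zpoints_\MMM)$ has top-degree part $y_i^+ + y_i^- + z_i$.

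For (3) and (4) the lift is homogeneous. Note that $\prod_{b \in B} z_b$ evaluates to $1$ on $\zzz_Y$ only when $B \subseteq \Flat(Y)$, forcing $F = \overline{B} \subseteq \Flat(Y)$ and making $Y\mid_{\III - F}$ a covector of $\MMM^F$. In case (3), $i \in F \subseteq \Flat(Y)$ gives $y_i^{\pm}(\zzz_Y) = 0$. In case (4), if $\prod_{i \in \Supp(X)} y_i^{X(i)}(\zzz_Y) = 1$ then $Y\mid_{\III - F}$ agrees with $X$ on $\Supp(X)$, whence $X \circ Y\mid_{\III - F} = Y\mid_{\III - F}$, contradicting that $X$ is a circuit of $\MMM^F$. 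Hence both representatives vanish on $\Zpoints_\MMM$.

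The main obstacle is (5), since the chosen representative need not vanish on $\Zpoints_\MMM$ and must instead appear as the top-degree component of an inhomogeneous member of $\II(\Zpoints_\MMM)$. The key input is Lemma~\ref{lem:two-values-attained-big}: applied to the COM $\MMM^F$, the symmetric circuit $X$, the fixed proper subset $J \subsetneq \Supp(X)$, and the covector $Y\mid_{\III - F}$, it produces $i_0, i_1 \in \Supp(X)$ with $\tilde{y}_{i_0}(\zzz_Y) = 0$ and $\tilde{y}_{i_1}(\zzz_Y) = 1$. In particular, whenever $F \subseteq \Flat(Y)$, both $\prod_i \tilde{y}_i$ and $\prod_i (1 - \tilde{y}_i)$ vanish at $\zzz_Y$, so the polynomial
\[
p := \prod_{b \in B} z_b \cdot \left[ \prod_{i \in \Supp(X)} (1 - \tilde{y}_i) \;-\; (-1)^s \prod_{i \in \Supp(X)} \tilde{y}_i \right]
\]
lies in $\II(\Zpoints_\MMM)$ (when $F \not\subseteq \Flat(Y)$ the prefactor already vanishes). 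Expanding $\prod_i(1 - \tilde{y}_i) = \sum_{k=0}^s (-1)^k e_k(\tilde{y}_i)$ and cancelling the $k = s$ term, we rewrite $p = \prod_{b \in B} z_b \cdot \sum_{k=0}^{s-1} (-1)^k e_k(\tilde{y}_i)$, whose top-degree homogeneous component is $(-1)^{s-1} \prod_{b \in B} z_b \cdot e_{s-1}(\tilde{y}_i)$. This element of $\gr \, \II(\Zpoints_\MMM)$ is, up to sign and modulo $\widetilde{I}_\MMM$, the generator from (5). I expect (5) to be the only case requiring real work, because it is the only one where a nonhomogeneous lift is needed and where the combinatorial content of Lemma~\ref{lem:two-values-attained-big} is indispensable.
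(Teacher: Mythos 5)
Your proof is correct and follows essentially the same approach as the paper's, including the use of Lemma~\ref{lem:two-values-attained-big} as the key input for case (5). The only difference is cosmetic: you write the inhomogeneous witness for (5) directly as $\prod_{b\in B}z_b\bigl[\prod_i(1-\tilde{y}_i)-(-1)^s\prod_i\tilde{y}_i\bigr]$, whereas the paper extracts the coefficient of $t^{s-1}$ from the rational function $\prod_{b\in B}z_b\cdot\prod_i(1+\tilde{y}_it)/(1+t)$; up to the sign $(-1)^{s-1}$ both yield the same element $\prod_{b\in B}z_b\cdot\sum_{k=0}^{s-1}(-1)^k e_k(\tilde{y}_i)$ of $\II(\Zpoints_\MMM)$.
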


\begin{proof}
    We show that each generator of $\widehat{I}_\MMM$ lies in $\gr \, \II(\Zpoints_\MMM)$. The five types of generators in Definition~\ref{def:I-ideal} are handled as follows.

    (1) For any $i \in \III$, the definition of $\Zpoints_\MMM$ yields the memberships
    \[
    y_i^+ \cdot (y_i^+ - 1), \quad y_i^- \cdot (y_i^- - 1), \quad z_i \cdot (z_i - 1), \quad y_i^+ \cdot y_i^-, \quad y_i^+ \cdot z_i, \quad y_i^- \cdot z_i \quad \in \quad \II(\Zpoints_\MMM).
    \]
    Taking highest degree components, we see that any degree 2 monomial in $\{y_i^+,y_i^-,z_i\}$ lies in $\gr \, \II(\Zpoints_\MMM)$.

    (2) For any $i \in \III$, the definition of $\Zpoints_\MMM$ implies that $y_i^+ +  y_i^- + z_i - 1 \in \II(\Zpoints_\MMM)$. Taking the top degree component gives $y_i^+ + y_i^- + z_i \in \gr \, \II(\Zpoints_\MMM)$.

    (3) Let $F \in \LLL(\MMM)$ be a flat and fix a basic set $B \subseteq F$. If $i \in F$, the definition of $\Zpoints_\MMM$ implies 
    \begin{equation}
        \prod_{b \in B} z_b \cdot y_i^+, \quad  \prod_{b \in B} z_b \cdot y_i^- \quad \in \quad \II(\Zpoints_\MMM)
    \end{equation}
    so that these homogeneous polynomials also lie in $\gr \, \II(\Zpoints_\MMM)$.

    (4) Let $F \in \LLL(\MMM)$ be a flat and fix a basic set $B \subseteq F$. Let $X: (\III -F) \to \{+,-,0\}$ be a circuit of $\MMM^F$. We establish the membership  
    \begin{equation}
    \label{eq:desired-membership-three}
    \prod_{b \in B} z_b \cdot \prod_{i \in \Supp(X)} y_i^{X(i)} \in \gr \, \II(\Zpoints_\MMM).
    \end{equation}
    Viewed as a function on the locus $\Zpoints_\MMM$, one has the evaluation
    \begin{equation}
    \label{eqn:face-element-evaluation}
    \prod_{b \in B} z_b : \zzz_X \mapsto \begin{cases}
        1 & \text{$X(i) = 0$ for all $i \in F$}, \\
        0 & \text{otherwise,}
    \end{cases}
    \end{equation}
    so that \eqref{eq:desired-membership-three} vanishes on any locus point $\zzz_X$ corresponding to a covector $X \in \MMM$ with $F \not\subseteq \Flat(X)$, i.e. any covector $X$ which does not contribute to $\MMM^F$.
    Since $X$ is a circuit of $\MMM^F$, it easily follows that 
    \begin{equation}
        \prod_{b \in B} z_b \cdot \prod_{i \in \Supp(X)} y_i^{X(i)} \in \II(\Zpoints_\MMM) \quad \Rightarrow \quad 
        \prod_{b \in B} z_b \cdot \prod_{i \in \Supp(X)} y_i^{X(i)} \in \gr \, \II(\Zpoints_\MMM)
    \end{equation}
    where the implication holds because the polynomial on the left is homogeneous.

    (5) Let $F \in \LLL(\MMM)$ be a flat, let $B \subseteq F$ be a basic set for $F$, and let $X: (\III -F) \to \{+,-,0\}$ be a symmetric circuit of $\MMM^F$ with support size $s = \#\Supp(X) > 1$. Fix a nonempty and proper subset $\varnothing \subsetneq \JJJ \subsetneq \Supp(X)$ and let $\{\tilde{y}_i \,:\, i \in \Supp(X) \}$ be defined using $\JJJ$ as in Lemma~\ref{lem:two-values-attained-big}. Our goal is to establish the membership
    \begin{equation}
        \label{eqn:membership-four-one}
        \prod_{b \in B} z_b \cdot e_{s-1}(\tilde{y}_i \,:\, i \in \Supp(X)) \in \gr \, \II(\Zpoints_\MMM).
    \end{equation}
    Indeed, let $t$ be a new variable and consider the rational function
    \begin{equation}
        \label{eqn:membership-four-two}
        \prod_{b \in B} z_b \cdot \left(
            \frac{\prod_{i \in \Supp(X)} (1 + \tilde{y}_i  \cdot t)}{1+t}
        \right).
    \end{equation}
    Equation~\eqref{eqn:face-element-evaluation} implies that \eqref{eqn:membership-four-two} evaluates to 0 on any locus point $\zzz_X$ corresponding to a covector $X \in \MMM$ for which $F \not\subseteq \Flat(X)$. If $F \subseteq \Flat(X)$ so that $X$ contributes to $\MMM^F$, Lemma~\ref{lem:two-values-attained-big} applied to $\MMM^F$ implies that \eqref{eqn:membership-four-two} evaluates on $\zzz_X$ to a polynomial in $t$ of degree $<s-1$. Overall, we see that \eqref{eqn:membership-four-two} evaluates to a polynomial in $t$ of degree $< s-1$ on any point of the big locus $\Zpoints_\MMM$.  Taking the coefficient of $t^{s-1}$ shows
    \begin{equation}
        \prod_{b \in B} z_b \cdot (e_{s-1}(\tilde{y}_i \,:\, i \in \Supp(X)) - e_{s-2}(\tilde{y}_i \,:\, i \in \Supp(X)) + \cdots + (-1)^{s-1}) \in \II(\Zpoints_\MMM)
    \end{equation}
    and taking the top degree component gives the desired membership \eqref{eqn:membership-four-one}.
\end{proof}

Lemma~\ref{lem:big-ideal-containment} gives rise to the sequence of surjections $\widehat{S} \twoheadrightarrow \widehat{S}/\widetilde{I}_\MMM \twoheadrightarrow \widehat{S}/\widehat{I}_\MMM \twoheadrightarrow \RRR(\Zpoints_\MMM)$. We aim to show that $\widehat{I}_\MMM = \gr \, \II(\Zpoints_\MMM)$ so that this last surjection $\widehat{S}/\widehat{I}_\MMM \twoheadrightarrow \RRR(\Zpoints_\MMM)$ is an isomorphism. To do this, we use Lemma~\ref{lem:z-divisibility} to build a filtration on $\widehat{\VVV}_\MMM$ as follows.

Recall that $\LLL(\MMM)$ is partially ordered by containment. Since $\widetilde{I}_\MMM \subseteq \widehat{I}_\MMM$, Lemma~\ref{lem:z-divisibility} implies 
\begin{equation}
z_{F_2} \cdot (\widehat{S}/\widehat{I}_\MMM) \subseteq z_{F_1} \cdot (\widehat{S}/\widehat{I}_\MMM) \quad \text{whenever $F_1 \subseteq F_2.$}
\end{equation}
For any $F \in \LLL(\MMM)$, the assignment $(\widehat{S}/\widehat{I}_\MMM)_F := z_F \cdot \widehat{S}/\widehat{I}_\MMM$ is an $\LLL(\MMM)$-filtration on $\widehat{S}/\widehat{I}_\MMM$. (Since $\widehat{I}_\MMM$ contains the squares of all variables in $\widehat{S}$, the quotient $\widehat{S}/\widehat{I}_\MMM$ is a finite-dimensional vector space.)

We employ NBC sets to put our $\LLL(\MMM)$-filtration to good use. Let $<$ be an arbitrary but fixed total order on $\III$. The total order $<$ restricts to a total order on $\III - F$ for any flat $F \in \LLL(\MMM)$. Recall that $\NNN(\MMM^F)$ is the collection of NBC sets of the contraction $\MMM^F$; these are subsets of $\III-F$. We also fix a basic set $B(F)$ for each flat $F \in \LLL(\MMM)$. 

For any $F \in \LLL(\MMM)$, let $\NNN_F$ be the collection of monomials
\begin{equation}
    \NNN_F := \left\{ 
            \prod_{i \in N} y_i^+ \,:\, N \in \NNN(\MMM^F)
    \right\} \subseteq \widehat{S}.
\end{equation}  
Let $\widehat{\NNN} \subseteq \widehat{S}$ be the set of monomials
\begin{equation}
    \widehat{\NNN} := \bigsqcup_{F \in \LLL(\MMM)} \left(  \prod_{b \in B(F)} z_b \cdot \NNN_F \right)
\end{equation}
where the use of disjoint union $\sqcup$ is justified because distinct flats $F$ have distinct basic sets $B(F)$.

Our main structural result on $\RRR(\Zpoints_\MMM)$ requires one final definition. If $F \in \LLL(\MMM)$ is a flat of $\MMM$, Lemma~\ref{lem:basic-lemma} (1) states that every basic set $\BBB$ for $F$ has the same cardinality. We write $\codim(F)$ for this common cardinality $\# \BBB$.

\begin{theorem}
    \label{thm:big-locus-identification}
    Let $\MMM$ be a conditional oriented matroid on $\III$. We have the equality of ideals \[\widehat{I}_\MMM = \gr \, \II(\Zpoints_\MMM)\] in the polynomial ring $\widehat{S}$ so that $\RRR(\Zpoints_\MMM) = \widehat{S}/\widehat{I}_\MMM$. The set $\widehat{\NNN}$ descends to a basis of $\RRR(\Zpoints_\MMM)$. We have an isomorphism of graded $\FF$-vector spaces
    \[
    \RRR(\Zpoints_\MMM)  \cong \bigoplus_{F \in \LLL(\MMM)} \RRR(\Ypoints_{\MMM^F})(- \codim(F))
    \]
    where $\RRR(\Ypoints_{\MMM^F})(-d)$ is the graded vector space $\RRR(\Ypoints_{\MMM^F})$ with degree shifted up by $d$.
\end{theorem}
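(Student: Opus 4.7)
The plan is to use the filtration $(\widehat{S}/\widehat{I}_\MMM)_F := z_F \cdot (\widehat{S}/\widehat{I}_\MMM)$ on the quotient already known (by Lemma~\ref{lem:big-ideal-containment}) to surject onto $\RRR(\Zpoints_\MMM)$, and to squeeze $\dim(\widehat{S}/\widehat{I}_\MMM)$ between $\dim \RRR(\Zpoints_\MMM) = \#\MMM$ below and $\sum_{F} \dim \RRR(\Ypoints_{\MMM^F}) = \sum_{F} \#\TTT(\MMM^F)$ above via Lemma~\ref{lem:P-filtration}. Equality of these two bounds is exactly Lemma~\ref{lem:tope-contraction-count}, which will collapse everything simultaneously. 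Well-definedness of the filtration and gradedness follow from Lemma~\ref{lem:z-divisibility} and from each $z_F$ being homogeneous of degree $\codim(F)$.

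The heart of the argument is identifying the subquotient
\[
(\widehat{S}/\widehat{I}_\MMM)_{=F} \;=\; z_F(\widehat{S}/\widehat{I}_\MMM)\;\Big/\;\textstyle\sum_{F' \supsetneq F} z_{F'}(\widehat{S}/\widehat{I}_\MMM)
\]
as a degree $\codim(F)$ shift of $\RRR(\Ypoints_{\MMM^F})$. For this I define $\phi_F\colon \FF[y_i^+, y_i^- : i \in \III - F] \to (\widehat{S}/\widehat{I}_\MMM)_{=F}$ by $\phi_F(f) := z_F \cdot f$, which bumps degrees up by $\codim(F)$. The key calculation is that $z_F \cdot z_i = 0$ in $(\widehat{S}/\widehat{I}_\MMM)_{=F}$ for every $i \in \III$: for $i \in B(F)$ this is a squared variable, killed by Definition~\ref{def:I-ideal}(1); for $i \in F \setminus B(F)$ the set $B(F) \cup \{i\}$ is nonbasic and Definition~\ref{def:tilde-ideal}(1) applies; for $i \in \III - F$ either $B(F) \cup \{i\}$ is nonbasic (again Definition~\ref{def:tilde-ideal}(1)) or it is basic for $\overline{F \cup \{i\}}$, in which case $z_F z_i = z_{\overline{F \cup \{i\}}}$ is killed in the subquotient because $\overline{F \cup \{i\}} \supsetneq F$. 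Combined with the explicit generators of $\widehat{I}_\MMM$ in Definition~\ref{def:I-ideal}, this lets me match each generator of $\gr\,\II(\Ypoints_{\MMM^F})$ listed in Theorem~\ref{thm:small-ideal-generators} and conclude that $\phi_F$ descends to a surjection $\overline{\phi}_F \colon \RRR(\Ypoints_{\MMM^F})(-\codim(F)) \twoheadrightarrow (\widehat{S}/\widehat{I}_\MMM)_{=F}$. Surjectivity follows because every $z$-variable dies by the calculation above and every $y_i^\pm$ with $i \in F$ dies by Definition~\ref{def:I-ideal}(3).

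Feeding $\overline{\phi}_F$ into Lemma~\ref{lem:P-filtration} yields
\[
\dim \RRR(\Zpoints_\MMM) \;\leq\; \dim(\widehat{S}/\widehat{I}_\MMM) \;\leq\; \sum_{F \in \LLL(\MMM)} \dim \RRR(\Ypoints_{\MMM^F}) \;=\; \sum_{F \in \LLL(\MMM)} \#\TTT(\MMM^F) \;=\; \#\MMM \;=\; \dim \RRR(\Zpoints_\MMM),
\]
using Lemma~\ref{lem:big-ideal-containment} for the leftmost inequality, Proposition~\ref{prop:small-locus-identification} in the middle, and Lemma~\ref{lem:tope-contraction-count} on the right. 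Hence every inequality is an equality, which simultaneously forces $\widehat{I}_\MMM = \gr\,\II(\Zpoints_\MMM)$, promotes each $\overline{\phi}_F$ to a graded isomorphism, and produces the graded vector space decomposition via the equality case of Lemma~\ref{lem:P-filtration}. The basis $\widehat{\NNN}$ then arises by lifting the NBC basis $\NNN_F$ of each $\RRR(\Ypoints_{\MMM^F})$ (recorded after Theorem~\ref{thm:small-ideal-generators}) through $\overline{\phi}_F$ and assembling over $F$, the disjointness of the pieces being built into the definition of $\widehat{\NNN}$.

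The step I expect to be the main obstacle is the bookkeeping in the subquotient identification: specifically, the case analysis for $z_F \cdot z_i$ above, and the check that the symmetric-circuit generators of Definition~\ref{def:I-ideal}(5) reduce modulo higher flats to the elementary symmetric polynomial generators of Theorem~\ref{thm:small-ideal-generators}(4). The latter is precisely why the combination $\tilde{y}_i = y_i^{X(i)} + z_i$ was built into Definition~\ref{def:I-ideal}(5), since the $z_i$ summand disappears in $(\widehat{S}/\widehat{I}_\MMM)_{=F}$ by the key calculation.
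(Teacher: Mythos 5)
Your proposal is correct and follows the paper's own architecture: the $\LLL(\MMM)$-filtration $F \mapsto z_F\cdot(\widehat{S}/\widehat{I}_\MMM)$, the dimension squeeze between $\#\MMM$ and $\sum_F \#\TTT(\MMM^F)$ via Lemmas~\ref{lem:big-ideal-containment}, \ref{lem:P-filtration}, and \ref{lem:tope-contraction-count}, and the identification of the subquotient at $F$ by multiplication by $z_F$ (your $\phi_F$ is exactly the paper's $\widetilde{\varphi}_F$, with the same kernel check against the generators of Theorem~\ref{thm:small-ideal-generators}). The one real difference is organizational: the paper first proves a spanning ``Claim'' for the subquotients by a lexicographic straightening argument with the monomials $\prod_{b\in B(F)}z_b\cdot \NNN_F$, so its upper bound is $\#\NNN(\MMM^F)$ via \eqref{eq:nbc-tope}, and only afterwards introduces $\varphi_F$; you instead build the surjection $\RRR(\Ypoints_{\MMM^F})\twoheadrightarrow(\widehat{S}/\widehat{I}_\MMM)_{=F}$ up front, take the upper bound $\dim\RRR(\Ypoints_{\MMM^F})=\#\TTT(\MMM^F)$ directly from orbit harmonics for the small locus, and then obtain the basis $\widehat{\NNN}$ by transporting the NBC basis of the small ring (cited after Theorem~\ref{thm:small-ideal-generators}) through the resulting isomorphisms. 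This is a mild streamlining --- it avoids redoing the NBC straightening inside the big ring at the cost of leaning on the cited small-ring NBC basis for the basis statement --- and your supporting computations are sound: the case analysis for $z_F\cdot z_i=0$ in the subquotient (square of a variable, nonbasic set, or jump to the strictly larger flat $\overline{F\cup\{i\}}$) is a correct expansion of the paper's terse appeal to Lemma~\ref{lem:z-divisibility}, and the observation that the Definition~\ref{def:I-ideal}(5) generators collapse to $z_F\cdot e_{s-1}(y_i^{X(i)}: i\in\Supp(X))$ once the $z_i$ terms die is exactly the paper's relation \eqref{eq:filtration-relation-one}.
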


\begin{proof}
    The algebra $\widehat{S}/\widehat{I}_\MMM$ has a $\LLL(\MMM)$-filtration structure with subquotients
    \[
    (\widehat{S}/\widehat{I}_\MMM)_{=F} = \frac{(\widehat{S}/\widehat{I}_\MMM)_F}{\sum_{F \subsetneq F'} (\widehat{S}/\widehat{I}_\MMM)_{F'}}.
    \]
    Lemma~\ref{lem:big-ideal-containment} and Lemma~\ref{lem:P-filtration} yield inequalities
    \begin{equation}
    \label{eq:inequality-chain-big}
        \# \MMM = \# \Zpoints_\MMM = \dim \RRR(\Zpoints_\MMM) = \dim \widehat{S}/\gr \, \II(\Zpoints_\MMM) \leq \dim \widehat{S}/\widehat{I}_\MMM \\ \leq \sum_{F \in \LLL(\MMM)} \dim (\widehat{S}/\widehat{I}_\MMM)_{= F}.
    \end{equation}
    On the other hand, Lemma~\ref{lem:tope-contraction-count} gives
    \begin{equation}
    \label{eq:tope-equality}
        \# \MMM = \sum_{F \in \LLL(\MMM)} \# \TTT(\MMM^F).
    \end{equation}
    Combining \eqref{eq:tope-equality} with  the result \eqref{eq:nbc-tope} of Dorpalen-Barry, Proudfoot, and Wang implies
    \begin{equation}
        \label{eq:nbc-equality}
        \# \widehat{\NNN} = \sum_{F \in \LLL(\MMM)} \# \NNN(\MMM^F) = \sum_{F \in \LLL(\MMM)} \# \TTT(\MMM^F) = \# \MMM.
    \end{equation}
    We establish the following claim.

    {\bf Claim:} {\em Let $F \in \LLL(\MMM)$ be a flat. The set $\prod_{b \in \BBB(F)} z_b \cdot \NNN_F$ of monomials in $\widehat{S}$ descends to a spanning set of $(\widehat{S}/\widehat{I}_\MMM)_{=F}$ as an $\FF$-vector space.}

    Working towards a proof of the claim, we establish some relations in $(\widehat{S}/\widehat{I}_\MMM)_{=F}$. Since $\widetilde{I}_\MMM \subseteq \widehat{I}_\MMM$ we have $\prod_{c \in C} z_c = 0$ in $\widehat{S}/\widehat{I}_\MMM$ for any nonbasic set $C \subseteq \III$. Thanks to Lemma~\ref{lem:z-divisibility}, the filtration defining $(\widehat{S}/\widehat{I}_\MMM)_{= F}$ gives rise to the relation
    \begin{equation}
    \label{eq:basic-filtration-relation}
        \prod_{b \in B(F)} z_b \cdot z_i = 0 \quad \text{in $(\widehat{S}/\widehat{I}_\MMM)_{=F}$ for any $i \in \III$.}
    \end{equation}
    If $X: (\III -F) \to \{+,-,0\}$ is a symmetric circuit of $\MMM^F$ of support size $s = \# \Supp(X) > 1$, the relation \eqref{eq:basic-filtration-relation} implies
    \begin{equation}
    \label{eq:filtration-relation-one}
        \prod_{b \in B(F)} z_b \cdot e_{s-1}(y_i^{X(i)} \,:\, i \in \Supp(X)) = 0 \quad \text{in $(\widehat{S}/\widehat{I}_\MMM)_{= F}$}.
    \end{equation}
     Similarly, the relation \eqref{eq:basic-filtration-relation} implies 
    \begin{equation}
    \label{eq:filtration-relation-two}
        \prod_{b \in B(F)} z_b \cdot (y_i^+ + y_i^-) = 0 \quad \text{in $(\widehat{S}/\widehat{I}_\MMM)_{=F}$ for any $i \in \III$.}
    \end{equation}
    The relations of $\widehat{S}/\widehat{I}_\MMM$ in Definition~\ref{def:I-ideal} (1) imply
    \begin{equation}
    \label{eq:filtration-relation-three}
        \prod_{b \in B(F)} z_b \cdot (y_i^+)^2 = \prod_{b \in B(F)} z_b \cdot (y_i^-)^2 = \prod_{b \in B(F)} z_b \cdot y_i^+ \cdot y_i^- = 0 \quad \text{in $(\widehat{S}/\widehat{I}_\MMM)_{= F}$ for any $i \in \III.$}
    \end{equation}
    If $X: (\III -F) \to \{+,-,0\}$ is a circuit of $\MMM^F$, Definition~\ref{def:I-ideal} (4) implies
    \begin{equation}
        \label{eq:filtration-relation-four}
        \prod_{b \in B(F)} z_b \cdot \prod_{i \in \Supp(X)} y_i^{X(i)} = 0 \quad \text{in $(\widehat{S}/\widehat{I}_\MMM)_{= F}$.}
    \end{equation}
    Definition~\ref{def:I-ideal} (3) yields
    \begin{equation}
        \label{eq:filtration-relation-five}
        \prod_{b \in B(F)} z_b \cdot y_i^+ = 0 \quad \text{in $(\widehat{S}/\widehat{I}_\MMM)_{= F}$ for all $i \in F$}.
    \end{equation}
    
    With the above relations in hand, the claim is proven by a  straightening argument. The relations \eqref{eq:basic-filtration-relation}, \eqref{eq:filtration-relation-two}, and \eqref{eq:filtration-relation-five} imply that $(\widehat{\VVV}_\MMM)_{=F}$ is spanned by elements of the form
    \begin{equation}
    \label{eq:purported-contradiction}
        \prod_{b \in \BBB(F)} z_b \cdot m
    \end{equation}
    where $m$ is a monomial in $\{y_i^+ \,:\, i \in \III-F \}$. Let $\prec$ be the lexicographical term order on the monomials in $\FF[y_i^+ \,:\, i \in \III-F ]$ induced by the total order $<$ on $\III$. Working towards a contradiction, let $m$ be the $\prec$-minimal monomial in $\FF[y_i^+ \,:\, i \in \III-F ]$ such that \eqref{eq:purported-contradiction} does not lie in the span of $\prod_{b \in \BBB(F)} z_b \cdot \NNN_F$ when regarded as an element of $(R_\MMM)_{=F}$. The relations \eqref{eq:filtration-relation-two}, \eqref{eq:filtration-relation-three}, and \eqref{eq:filtration-relation-four} imply that $m$ is a squarefree monomial and that 
    \[
    \prod_{i \in \Supp(X)} y_i^+ \nmid m \quad \text{for any circuit $X : (\III -F) \to \{+,-,0\}$ of $\MMM^F$.}
    \]
    If $m \notin \prod_{b \in \BBB(F)} z_b \cdot \NNN_F$, there must be a symmetric circuit $X: (\III -F) \to \{+,-,0\}$ of $\MMM^F$ such that 
    \[
    \prod_{i \in \Supp(X)^\circ} y_i^+  \mid m.
    \]
    where $\Supp(X)^\circ = \Supp(X) - \{\min_<(\Supp(X))\}.$ The relation \eqref{eq:filtration-relation-one} implies 
    \[
    m = \text{ a $\ZZ$-linear combination of monomials $m'$ in $\{y_i^+ \,:\, i \in \III-F\}$ with $m' \prec m$} 
    \]
    in $(\widehat{S}/\widehat{I}_\MMM)_{=F}$, which contradicts the $\prec$-minimality of $m$ and proves the claim.

    We use the claim (and its proof) to derive the theorem as follows. The claim implies  
    \begin{equation}
    \label{eq:filtration-piece-inequality}
        \dim (\widehat{S}/\widehat{I}_\MMM)_{= F} \leq \# \NNN(\MMM^F) \quad \text{for all $F \in \LLL(\MMM).$}
    \end{equation}
    Combining \eqref{eq:inequality-chain-big}, \eqref{eq:tope-equality}, and \eqref{eq:filtration-piece-inequality} forces
    \begin{equation}
    \label{eq:filtration-piece-equality}
        \dim (\widehat{S}/\widehat{I}_\MMM)_{= F} = \# \NNN(\MMM^F) \quad \text{for all $F \in \LLL(\MMM).$}
    \end{equation}
    The inequalities in \eqref{eq:inequality-chain-big} are therefore equalities and we have
    \begin{equation}
        \widehat{I}_\MMM = \gr \, \II(\Zpoints_\MMM) \quad \text{so that } \quad 
        \RRR(\Zpoints_\MMM) = \widehat{S}/\widehat{I}_\MMM.
    \end{equation}
    The $\LLL(\MMM)$-filtration on $\widehat{S}/\widehat{I}_\MMM$ is therefore an $\LLL(\MMM)$-filtration on $\RRR(\Zpoints_\MMM)$, so the notation $\RRR(\Zpoints_\MMM)_{= F}$ for $F \in \LLL(\MMM)$ makes sense.
    
    The claim combined with \eqref{eq:filtration-piece-equality} imply that $\widehat{\NNN}$ descends to an $\FF$-basis of $\RRR(\Zpoints_\MMM)$. In particular (or by Lemma~\ref{lem:P-filtration}), we have an isomorphism of graded vector spaces
    \begin{equation}
    \label{eq:filtration-isomorphism}
        \RRR(\Zpoints_\MMM) \cong \bigoplus_{F \in \LLL(\MMM)} \RRR(\Zpoints_\MMM)_{=F}.
    \end{equation}

    Let $F \in \LLL(\MMM)$ be a flat. The theorem reduces to proving \[\RRR(\Zpoints_\MMM)_{=F} \cong \RRR(\Ypoints_{\MMM^F})(-\codim(F))\] as graded vector spaces; we do this as follows. Write $S_F$ for the polynomial ring 
    \[
    S_F := \FF[y_i^+, y_i^- \,:\, i \in \III - F]
    \]
    so that we have the ideal $\gr \, \II(\Ypoints_{\MMM^F}) \subseteq S_{F}$ and $S_F/\gr \, \II(\Ypoints_{\MMM^F}) =  \RRR(\Ypoints_{\MMM^F})$. Let $\widetilde{\varphi}_F$ be the composition
    \begin{equation}
        \widetilde{\varphi}_F: S_F \hookrightarrow \widehat{S} \twoheadrightarrow \RRR(\Zpoints_\MMM) \xrightarrow{ \, \, \times z_F \, \, } \RRR(\Zpoints_\MMM)_{\geq F} \twoheadrightarrow \RRR(\Zpoints_\MMM)_{= F}
    \end{equation}
    where the inclusion $S_F \hookrightarrow \widehat{S}$ comes from containment of variable sets. The map $\widetilde{\varphi}_F$ is a homomorphism of $S_F$-modules which shifts degree up by $\codim(F)$. Theorem~\ref{thm:small-ideal-generators} gives generators for the ideal $\gr \, \II(\Ypoints_{\MMM^F}) \subseteq S_F$. Comparing these genenerators with the relations \eqref{eq:filtration-relation-one}, \eqref{eq:filtration-relation-two}, \eqref{eq:filtration-relation-three}, and \eqref{eq:filtration-relation-four} gives $\gr \, \II(\Ypoints_{\MMM^F})\subseteq \Ker(\widetilde{\varphi}_F)$ so that $\widetilde{\varphi}_F$ induces an $S_F$-module homomorphism
    \begin{equation}
        \varphi_F : \RRR(\Ypoints_{\MMM^F}) \longrightarrow \RRR(\Zpoints_\MMM)_{=F}
    \end{equation}
    which shifts degree up by $\codim(F)$. The claim implies that $\varphi_F$ is surjective. Since 
    \[
    \dim \RRR(\Ypoints_{\MMM^F}) = \# \TTT(\MMM^F) = \# \NNN(\MMM^F) = \dim \RRR(\Zpoints_\MMM)_{=F},
    \]
    the map $\varphi_F$ is an isomorphism. 
\end{proof}

Theorem~\ref{thm:big-locus-identification} was phrased in terms of orbit harmonics rings; its VG ring avatar is as follows. Lemma~\ref{lem:tope-contraction-count} implies that the big and small VG rings satisfy
\begin{equation}
    \widehat{VG}_\MMM \cong \bigoplus_{F \in \LLL(M)} \VG_{\MMM^F}.
\end{equation}
Propositions~\ref{prop:small-locus-identification} and \ref{prop:big-orbit-harmonics-interpretations} give an equivalent formulation of the graded vector space isomorphism of Theorem~\ref{thm:big-locus-identification} in terms of big and small graded VG rings:
\begin{equation}
    \widehat{\VVV}_\MMM \cong \bigoplus_{F \in \LLL(\MMM)} \VVV_{\MMM^F} (-\codim(F)).
\end{equation}
This isomorphism yields a Hilbert series identity.

\begin{corollary}
    \label{cor:big-hilbert-series}
    For any conditional oriented matroid $\MMM$ on the ground set $\III$, one has the Hilbert series
    \[
    \Hilb(\widehat{\VVV}_\MMM;q) = \Hilb(\RRR(\Zpoints_\MMM);q) = \sum_{F \in \LLL(\MMM)} q^{\codim(F)} \cdot \Hilb(\VVV_{\MMM^F};q). 
    \]
\end{corollary}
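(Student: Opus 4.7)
The plan is to deduce this Hilbert series identity essentially as a direct bookkeeping consequence of Theorem~\ref{thm:big-locus-identification}, which has already established the underlying graded vector space decomposition. The first equality $\Hilb(\widehat{\VVV}_\MMM;q) = \Hilb(\RRR(\Zpoints_\MMM);q)$ comes from Proposition~\ref{prop:big-orbit-harmonics-interpretations}, which supplies an isomorphism of graded algebras $\widehat{\VVV}_\MMM \cong \RRR(\Zpoints_\MMM)$; in particular they have the same Hilbert series.

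For the second equality, I would invoke Theorem~\ref{thm:big-locus-identification}, which gives an isomorphism of graded vector spaces
\[
\RRR(\Zpoints_\MMM) \cong \bigoplus_{F \in \LLL(\MMM)} \RRR(\Ypoints_{\MMM^F})(-\codim(F)).
\]
Taking Hilbert series of both sides, a direct sum decomposes additively and a shift of degree up by $\codim(F)$ contributes a factor of $q^{\codim(F)}$, so
\[
\Hilb(\RRR(\Zpoints_\MMM);q) = \sum_{F \in \LLL(\MMM)} q^{\codim(F)} \cdot \Hilb(\RRR(\Ypoints_{\MMM^F});q).
\]
Finally, Proposition~\ref{prop:small-locus-identification} identifies $\RRR(\Ypoints_{\MMM^F}) \cong \VVV_{\MMM^F}$ as graded algebras, so $\Hilb(\RRR(\Ypoints_{\MMM^F});q) = \Hilb(\VVV_{\MMM^F};q)$, yielding the claimed formula.

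Since all the heavy lifting—establishing the $\LLL(\MMM)$-filtration, showing it splits, and identifying the subquotients with shifted small graded VG rings—was carried out in the proof of Theorem~\ref{thm:big-locus-identification}, there is no genuine obstacle here: the corollary is obtained by taking Hilbert series of the graded isomorphism already proven. The entire proof is a few lines.
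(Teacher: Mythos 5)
Your proof is correct and matches the paper's own derivation: the paper likewise deduces the corollary by taking Hilbert series of the graded vector space isomorphism in Theorem~\ref{thm:big-locus-identification}, translated via Propositions~\ref{prop:small-locus-identification} and \ref{prop:big-orbit-harmonics-interpretations} into the statement $\widehat{\VVV}_\MMM \cong \bigoplus_{F \in \LLL(\MMM)} \VVV_{\MMM^F}(-\codim(F))$.
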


Much of our main result Theorem~\ref{thm:big-locus-identification} holds over any commutative ground ring $A$.  Let $\widehat{S}^\ZZ$ be the polynomial ring
\[
\widehat{S}^A := A[y_i^+,y_i^-,z_i \,:\, i \in \III]
\]
and define $\widehat{I}^A_\MMM \subseteq \widehat{S}^A$ to be the ideal with the same generators as $\widehat{I}_\MMM$. By definition, the {\em big Varchenko--Gelfand ring with coefficients in A} is the quotient
\begin{equation}
    \widehat{\VVV}^A_\MMM := \widehat{S}^A/\widehat{I}^A_\MMM.
\end{equation}
Then $\widehat{\VVV}^A_\MMM$ is a graded $A$-algebra.

\begin{proposition}
    \label{prop:other-rings}
    The quotient $\widehat{\VVV}^A_\MMM$ is a free $A$-module with $A$-basis $\widehat{\NNN}$.
\end{proposition}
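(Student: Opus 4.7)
The plan is to deduce Proposition~\ref{prop:other-rings} from the field case already established in Theorem~\ref{thm:big-locus-identification} by a base-change argument. The first step is to observe that the generators of $\widehat{I}_\MMM$ listed in Definition~\ref{def:I-ideal} all have coefficients in $\ZZ$, so the canonical map $\widehat{S}^\ZZ \otimes_\ZZ A \to \widehat{S}^A$ carries the ideal generated by these elements over $\ZZ$ onto the ideal generated by them over $A$. By right exactness of the tensor product, this yields an isomorphism
\[
\widehat{\VVV}^\ZZ_\MMM \otimes_\ZZ A \xrightarrow{\,\,\sim\,\,} \widehat{\VVV}^A_\MMM
\]
of $A$-modules. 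It therefore suffices to prove the special case $A = \ZZ$: namely, that $\widehat{\VVV}^\ZZ_\MMM$ is a free $\ZZ$-module with $\ZZ$-basis $\widehat{\NNN}$.

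For the spanning part, I would rerun the straightening argument in the proof of Theorem~\ref{thm:big-locus-identification} over $\ZZ$. Inspecting the relations \eqref{eq:basic-filtration-relation} through \eqref{eq:filtration-relation-five}, all the coefficients involved are integers, and (crucially) the leading coefficient of the symmetric-circuit relation \eqref{eq:filtration-relation-one} used to reduce $\prod_{i \in \Supp(X)^\circ} y_i^+$ is $\pm 1$. Consequently the same lexicographic induction on monomials reduces any element of $\widehat{\VVV}^\ZZ_\MMM$ to a $\ZZ$-linear combination of monomials in $\widehat{\NNN}_F$ for an appropriate $F \in \LLL(\MMM)$. Taking the union over $F \in \LLL(\MMM)$ and telescoping the $\LLL(\MMM)$-filtration from the bottom, one concludes that $\widehat{\NNN}$ spans $\widehat{\VVV}^\ZZ_\MMM$ as a $\ZZ$-module.

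For the linear independence part, suppose $\sum_{m \in \widehat{\NNN}} c_m \, m = 0$ in $\widehat{\VVV}^\ZZ_\MMM$ with each $c_m \in \ZZ$. Applying the base-change isomorphism with $A = \QQ$ produces the same identity in $\widehat{\VVV}^\QQ_\MMM$. But Theorem~\ref{thm:big-locus-identification} applied with $\FF = \QQ$ asserts that $\widehat{\NNN}$ descends to a $\QQ$-basis of $\widehat{\VVV}^\QQ_\MMM$, forcing every $c_m$ to be zero. Hence $\widehat{\NNN}$ is $\ZZ$-linearly independent, completing the proof that it is a $\ZZ$-basis. Base change back to an arbitrary commutative ring $A$ then delivers the proposition.

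The main thing to be careful about is the base-change isomorphism: one needs to know that the generators of Definition~\ref{def:I-ideal} really are defined over $\ZZ$ (which they are, since each is a $\{0,\pm 1\}$-combination of monomials) and that the straightening relations used in the spanning half of Theorem~\ref{thm:big-locus-identification} have leading coefficient $\pm 1$, so that no denominators are introduced. Once this bookkeeping is made explicit the result is essentially automatic from the field case.
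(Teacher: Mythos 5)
Your proposal is correct and follows essentially the same route as the paper's own (sketched) proof: establish the case $A=\ZZ$ by noting the straightening relations have only $\pm 1$ coefficients so $\widehat{\NNN}$ spans over $\ZZ$, rule out $\ZZ$-linear relations by passing to $\QQ$ where Theorem~\ref{thm:big-locus-identification} gives a basis, and then apply $A \otimes_\ZZ (-)$. Your extra bookkeeping about the generators of Definition~\ref{def:I-ideal} being defined over $\ZZ$ and the right-exactness base-change isomorphism is exactly the implicit content of the paper's argument.
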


\begin{proof} (Sketch)
We start with the case $A = \ZZ$.
The argument proving Theorem~\ref{thm:big-locus-identification} (with $\LLL(\MMM)$-filtrations defined over $\ZZ$) goes through to show that the set $\widehat{\NNN}$ spans $\widehat{S}^\ZZ / \widehat{I}^\MMM_\ZZ$ over $\ZZ$; the key point is that the relations \eqref{eq:filtration-relation-one}-\eqref{eq:filtration-relation-five} appearing in the proof of Theorem~\ref{thm:big-locus-identification} involve only the coefficients $\pm 1$. Since any $\ZZ$-linear relation on $\widehat{\NNN}$ modulo $\widehat{I}^\ZZ_\MMM$ would induce a $\QQ$-linear relation on $\widehat{\NNN}$ modulo $\widehat{I}^\QQ_\MMM$, we see that $\widehat{S}^\ZZ/\widehat{I}^\ZZ_\MMM$ is a free $\ZZ$-module with $\ZZ$-basis $\widehat{\NNN}$. Applying the functor $A \otimes_\ZZ (-)$ gives the proposition. 
\end{proof}

\subsection{Equivariant structure} When the conditional oriented matroid $\MMM$ admits automorphisms, Theorem~\ref{thm:big-locus-identification} and Corollary~\ref{cor:big-hilbert-series} have natural equivariant enhancements. The relevant algebra is as follows.

Let $\symm_{\pm \III}$ be the group of signed permutations of the ground set $\III$. The group $\symm_{\pm \III}$ acts naturally on signed subsets $X$ of $\III$. An element $w \in \symm_{\pm \III}$ is an {\em automorphism} of $\MMM$ if $w \cdot X \in \MMM$ for all covectors $X \in \MMM$. We write $\Aut(\MMM)$ for the group of automorphisms of $\MMM$. The subset $\TTT(\MMM) \subseteq \MMM$ of topes is stable under the action of $\Aut(\MMM)$.

The group $\symm_{\pm \III}$ of signed permutations acts naturally on the vector space $\FF^{\III \times \{+,-\}}$ of dimension $2 \cdot \# \III$ and the larger vector space $\FF^{\III \times \{+,-,0\}}$ of dimension $3 \cdot \# \III$. Here changing the sign at $i \in \III$ interchanges the  basis vectors indexed by $(i,+)$ and $(i,-)$ while fixing the one indexed by $(i,0)$. The subgroup $\Aut(\MMM)$ of $\symm_{\pm \III}$ acts on the small locus $\Ypoints_\MMM \subseteq \FF^{\III \times \{+,-\}}$ and the big locus $\Zpoints_\MMM \subseteq \FF^{\III \times \{+,-,0\}}$. It is not hard to see that 
\[
\Ypoints_\MMM \cong_{\Aut(\MMM)} \TTT(\MMM) \quad \text{and} \quad 
\Zpoints_\MMM \cong_{\Aut(\MMM)} \MMM
\]
as $\Aut(\MMM)$-sets.

The action of $\symm_{\pm \III}$ on $\FF^{\III \times \{+,-\}}$ and $\FF^{\III \times \{+,-,0\}}$ induces a variable permuting action of $\symm_{\pm \III}$ on the coordinate rings 
\[
S = \FF[y_i^+,y_i^- \,:\, i \in \III] \quad \text{and} \quad \widehat{S} = \FF[y_i^+,y_i^-,z_i \,:\, i \in \III]
\]
of these vector spaces. The subgroup $\Aut(\MMM) \subseteq \symm_{\pm \III}$ acts on $S$ and $\widehat{S}$ by restriction. The group $\Aut(\MMM)$ acts on the quotient rings $\RRR(\Ypoints_\MMM) \cong \VVV_\MMM$ and $\RRR(\Zpoints_\MMM) \cong \widehat{\VVV}_\MMM.$ Orbit harmonics immediately gives the ungraded module structure of these quotient rings.

\begin{theorem}
    \label{thm:ungraded-module-identification}
    The ideals $I_\MMM \subseteq S$ and $\widehat{I}_\MMM \subseteq \widehat{S}$ are stable under the action of $\Aut(\MMM)$. If $\# \Aut(\MMM) \in \FF^\times$ we have isomorphisms of ungraded $\Aut(\MMM)$-modules 
    \begin{equation}
        \VVV_\MMM \cong \RRR(\Ypoints_\MMM) \cong  \FF[\TTT(\MMM)] \quad \text{and} \quad 
        \widehat{\VVV}_\MMM \cong \RRR(\Zpoints_\MMM) \cong  \FF[\MMM].
    \end{equation}
\end{theorem}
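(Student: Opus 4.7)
The plan is to treat both claims uniformly as instances of a single orbit-harmonics principle: a $G$-stable locus $\Zpoints \subseteq \FF^n$ produces a canonical chain of ungraded $G$-module isomorphisms $\RRR(\Zpoints) \cong S/\II(\Zpoints) \cong \FF[\Zpoints]$ whenever $\# G \in \FF^\times$. I would apply this with $G = \Aut(\MMM)$ to both $\Ypoints_\MMM$ and $\Zpoints_\MMM$.

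First I would verify $\Aut(\MMM)$-stability of the ideals. The remarks preceding the theorem (together with the elementary $\Aut(\MMM)$-set identifications $\Ypoints_\MMM \cong \TTT(\MMM)$ and $\Zpoints_\MMM \cong \MMM$) show that $\Aut(\MMM) \subseteq \symm_{\pm \III}$ stabilizes the loci $\Ypoints_\MMM$ and $\Zpoints_\MMM$. Hence the vanishing ideals $\II(\Ypoints_\MMM)$ and $\II(\Zpoints_\MMM)$ are stable under the variable-permuting actions on $S$ and $\widehat S$ respectively. Since the top-degree component operation $\tau$ commutes with any degree-preserving linear automorphism of the polynomial ring, the associated graded ideals $I_\MMM = \gr \, \II(\Ypoints_\MMM)$ and $\widehat I_\MMM = \gr \, \II(\Zpoints_\MMM)$ (using Theorem~\ref{thm:big-locus-identification} for the latter equality) are likewise $\Aut(\MMM)$-stable.

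Next I would observe that the evaluation map $S \to \FF[\Ypoints_\MMM]$ sending $f$ to the function $\yyy \mapsto f(\yyy)$ is $\Aut(\MMM)$-equivariant, simply because the action on $S$ is by restriction the variable-permuting action coming from the coordinate action on $\FF^{\III \times \{+,-\}}$. By Lagrange interpolation this descends to an $\Aut(\MMM)$-equivariant isomorphism $S/\II(\Ypoints_\MMM) \cong \FF[\Ypoints_\MMM]$, and the $\Aut(\MMM)$-set bijection $\Ypoints_\MMM \cong \TTT(\MMM)$ identifies the target with $\FF[\TTT(\MMM)]$. The identical argument on the big locus gives $\widehat S/\II(\Zpoints_\MMM) \cong \FF[\MMM]$ as $\Aut(\MMM)$-modules.

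The final, and essentially only nontrivial, step is to pass from the ungraded quotient $S/\II(\Zpoints)$ to $\RRR(\Zpoints) = S/\gr \, \II(\Zpoints)$ as ungraded $\Aut(\MMM)$-modules. Let $F_d$ denote the image in $S/\II(\Zpoints)$ of the space of polynomials of degree at most $d$. This is an $\Aut(\MMM)$-stable exhaustive filtration whose associated graded module is canonically $\Aut(\MMM)$-isomorphic to $\RRR(\Zpoints)$. Under the hypothesis $\# \Aut(\MMM) \in \FF^\times$, Maschke's theorem splits each short exact sequence $0 \to F_{d-1} \to F_d \to F_d/F_{d-1} \to 0$ $\Aut(\MMM)$-equivariantly, so inductively one obtains an ungraded $\Aut(\MMM)$-module isomorphism $S/\II(\Zpoints) \cong \bigoplus_d F_d/F_{d-1} = \RRR(\Zpoints)$. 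This semisimplicity step is the only place where the order hypothesis on $\Aut(\MMM)$ enters, and is the main (though standard) obstacle; the remainder of the proof is formal.
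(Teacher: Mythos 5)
Your proposal is correct and takes essentially the same route as the paper: the paper states this theorem without a written proof, treating it as the standard equivariant orbit-harmonics fact recalled in the introduction (a $G$-stable finite locus with $\#G \in \FF^\times$ gives $\FF[\Zpoints] \cong S/\II(\Zpoints) \cong S/\gr\,\II(\Zpoints)$ as $G$-modules), and your write-up—stability of the loci forcing stability of $\II$ and of $\gr\,\II$ because $\tau$ commutes with the degree-preserving variable-permuting action, equivariant Lagrange interpolation, and Maschke's theorem splitting the degree filtration of the ungraded quotient—is exactly the standard justification of that fact, with the identifications $\Ypoints_\MMM \cong_{\Aut(\MMM)} \TTT(\MMM)$ and $\Zpoints_\MMM \cong_{\Aut(\MMM)} \MMM$ supplying the right-hand sides.
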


The graded $\Aut(\MMM)$-module structure of $\widehat{\VVV}_\MMM \cong \RRR(\Zpoints_\MMM)$ may be derived using the $\LLL(\MMM)$-filtration and Lemma~\ref{lem:P-filtration-equivariant}. We set up the relevant algebra.

The automorphism group $\Aut(\MMM)$ acts naturally on the poset $\LLL(\MMM)$. If $F \in \LLL(\MMM)$ is a flat, we write $\Aut(\MMM)_F \subseteq \Aut(\MMM)$ for the stabilizer subgroup
\[
\Aut(\MMM)_F := \{ w \in \Aut(\MMM) \,:\, w \cdot F = F \}.
\]
Let $\LLL(\MMM)/\Aut(\MMM)$ be the family of $\Aut(\MMM)$-orbits $[F]$ in $\LLL(\MMM)$.

As in the proof of Theorem~\ref{thm:big-locus-identification}, the ring $\RRR(\Zpoints_\MMM) \cong \widehat{\VVV}_\MMM$ admits an $\LLL(\MMM)$-filtration given by \[\RRR(\Zpoints_\MMM)_F := z_F \cdot \RRR(\Zpoints_\MMM).\]  If $F \in \LLL(\MMM)$ is a flat, $B \subseteq F$ is a basic set of $F$, and $w \in \Aut(\MMM)$ is an automorphism of $\MMM$, it not hard to see that $w \cdot B \subseteq w \cdot F$ is a basic set for $w \cdot F$. It follows that $w \cdot z_F = z_{w \cdot F}$ in $\RRR(\Zpoints_\MMM)$ and our $\LLL(\MMM)$-filtration is $\Aut(\MMM)$-equivariant.

For any $F \in \LLL(\MMM)$, the stabilizer $\Aut(\MMM)_F$ acts on $\RRR(\Ypoints_{\MMM^F})$ as follows. The disjoint union decomposition $\III = F \sqcup (\III -F)$ gives rise to a subgroup $\symm_{\pm F} \times \symm_{\pm (\III -F)} \subseteq \symm_{\pm \III}$. The signed permutation group $\symm_{\pm (\III -F)}$ permutes the variables of $S_F = \FF[y_i^+,y_i^- \,:\, i \in \III -F]$ as before. We extend this to an action of the product group $\symm_{\pm F} \times \symm_{\pm (\III -F)}$ by letting $\symm_{\pm F}$ act trivially. It is easily seen that $\Aut(\MMM)_F \subseteq \symm_{\pm F} \times \symm_{\pm (\III - F)}$, so that $\Aut(\MMM)_F$ acts on $S_F$ by restriction.
Since any element $w \in \Aut(\MMM)_F$ restricts to $\III -F$ to give an automorphism of $\MMM^F$, the ideal $\gr \, \II(\Ypoints_{\MMM^F}) \subseteq S_F$ is stable under the action of $\Aut(\MMM)_F$. This gives $\RRR(\Ypoints_{\MMM^F}) = S_F/\gr \, \II(\Ypoints_{\MMM^F})$ the structure of a graded $\Aut(\MMM)_F$-module.

\begin{theorem}
    \label{thm:graded-module-structure}
    Let $\MMM$ be a conditional oriented matroid on the ground set $\III$. Assume that $\# \Aut(\MMM) \in \FF^\times$. We have the isomorphism of graded $\Aut(\MMM)$-modules
    \[
    \RRR(\Zpoints_\MMM) \cong \bigoplus_{[F] \in \LLL(\MMM)/\Aut(\MMM)} \Ind_{\Aut(\MMM)_F}^{\Aut(\MMM)}\RRR(\Ypoints_{\MMM^F})(-\codim(F))
    \]
    where $\Ind_{\Aut(\MMM)_F}^{\Aut(\MMM)}(-)$ is induction of group representations and $(-\codim(F))$ shifts degree up by $\codim(F)$.
\end{theorem}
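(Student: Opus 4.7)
The plan is to apply Lemma~\ref{lem:P-filtration-equivariant} with $P = \LLL(\MMM)$, $G = \Aut(\MMM)$, and $V = \RRR(\Zpoints_\MMM)$. Two things must be verified to make this work: (i) the $\LLL(\MMM)$-filtration $\{\RRR(\Zpoints_\MMM)_F\}$ built in the proof of Theorem~\ref{thm:big-locus-identification} is $\Aut(\MMM)$-equivariant in the sense of the lemma, and (ii) for each flat $F$ the graded vector space isomorphism $\varphi_F : \RRR(\Ypoints_{\MMM^F})(-\codim(F)) \xrightarrow{\sim} \RRR(\Zpoints_\MMM)_{= F}$ produced in the proof of Theorem~\ref{thm:big-locus-identification} is in fact a graded $\Aut(\MMM)_F$-module isomorphism. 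Once both points are in hand, the dimension equality needed in Lemma~\ref{lem:P-filtration-equivariant} is already asserted by Theorem~\ref{thm:big-locus-identification}, and (under the assumption $\# \Aut(\MMM) \in \FF^\times$) the lemma immediately delivers the claimed decomposition.

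For step (i), I would start from the description of the action: a signed permutation $w \in \Aut(\MMM) \subseteq \symm_{\pm \III}$ sends $y_i^{\pm}$ to $y_{|w(i)|}^{\pm w(i)/|w(i)|}$ and $z_i$ to $z_{|w(i)|}$. Because $\Aut(\MMM)$ permutes flats of $\MMM$ and sends basic sets for $F$ to basic sets for $w \cdot F$ (which follows from the matroid-theoretic characterization of basic sets in Lemma~\ref{lem:basic-lemma}), we obtain $w \cdot z_F = z_{w \cdot F}$ in $\widehat{S}/\widetilde{I}_\MMM$, and in particular $\Aut(\MMM)_F$ fixes $z_F$. Since $\widehat{I}_\MMM = \gr\, \II(\Zpoints_\MMM)$ is $\Aut(\MMM)$-stable (Theorem~\ref{thm:ungraded-module-identification}), we get $w \cdot \RRR(\Zpoints_\MMM)_F = w \cdot z_F \cdot \RRR(\Zpoints_\MMM) = \RRR(\Zpoints_\MMM)_{w \cdot F}$, which is precisely the equivariance condition.

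For step (ii), I revisit the construction of $\varphi_F$. It arises as the composition
\[
S_F \hookrightarrow \widehat{S} \twoheadrightarrow \RRR(\Zpoints_\MMM) \xrightarrow{\, \times z_F\,} \RRR(\Zpoints_\MMM)_{\geq F} \twoheadrightarrow \RRR(\Zpoints_\MMM)_{= F},
\]
followed by factoring through $\RRR(\Ypoints_{\MMM^F})$. The inclusion $S_F \hookrightarrow \widehat{S}$ is $\Aut(\MMM)_F$-equivariant because $\Aut(\MMM)_F \subseteq \symm_{\pm F} \times \symm_{\pm (\III - F)}$ and, by the convention preceding the statement, $\symm_{\pm F}$ acts trivially on $S_F$ while $\symm_{\pm (\III - F)}$ acts by the same variable-permuting rule used in both $S_F$ and $\widehat{S}$. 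The surjection $\widehat{S} \twoheadrightarrow \RRR(\Zpoints_\MMM)$ is $\Aut(\MMM)$-equivariant by step (i), multiplication by $z_F$ is $\Aut(\MMM)_F$-equivariant because $z_F$ is $\Aut(\MMM)_F$-fixed, and the final surjection to $\RRR(\Zpoints_\MMM)_{= F}$ is $\Aut(\MMM)_F$-equivariant because the submodule $\sum_{F \subsetneq F'}\RRR(\Zpoints_\MMM)_{F'}$ is stable under $\Aut(\MMM)_F$ (again by step (i)). The factorization through $\RRR(\Ypoints_{\MMM^F})$ is automatic from this equivariance since the kernel of $\widetilde{\varphi}_F$ contains the $\Aut(\MMM)_F$-stable ideal $\gr\, \II(\Ypoints_{\MMM^F})$.

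The main potential obstacle is verifying equivariance at the subquotient level in (ii), but the explicit form of the map $\widetilde{\varphi}_F$ makes this largely mechanical once the action on $z_F$ is pinned down; the real content is already packaged in Theorem~\ref{thm:big-locus-identification} and Lemma~\ref{lem:P-filtration-equivariant}. In characteristic dividing $\#\Aut(\MMM)$ only the weaker conclusion of Brauer isomorphism is available, as noted in the remark following Lemma~\ref{lem:P-filtration-equivariant}.
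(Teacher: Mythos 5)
Your proposal is correct and follows essentially the same route as the paper: both verify that the $\LLL(\MMM)$-filtration is $\Aut(\MMM)$-equivariant, invoke Lemma~\ref{lem:P-filtration-equivariant} together with the dimension count from Theorem~\ref{thm:big-locus-identification}, and then check that the map $\varphi_F$ is $\Aut(\MMM)_F$-equivariant because it is built from the variable inclusion $y_i^\pm \mapsto y_i^\pm$ for $i \in \III - F$ and multiplication by the $\Aut(\MMM)_F$-fixed element $z_F$. Your writeup spells out the equivariance of each arrow in the composition defining $\widetilde{\varphi}_F$ a bit more explicitly than the paper does, but the argument is the same.
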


In terms of big and small graded VG rings, Theorem~\ref{thm:graded-module-structure} reads
\begin{equation}
    \widehat{\VVV}_\MMM \cong_{\Aut(\MMM)} \bigoplus_{[F] \in \LLL(\MMM)/\Aut(\MMM)} \VVV_{\MMM^F}(-\codim(F))
\end{equation}
as graded $\Aut(\MMM)$-modules.

\begin{proof}
Lemma~\ref{lem:P-filtration-equivariant} and Theorem~\ref{thm:big-locus-identification} give the isomorphism of graded $\Aut(\MMM)$-modules
\begin{equation}
    \RRR(\Zpoints_\MMM) \cong \bigoplus_{[F] \in \LLL(\MMM)/\Aut(\MMM)} \Ind_{\Aut(\MMM)_F}^{\Aut(\MMM)} \RRR(\Zpoints_\MMM)_{=F}.
\end{equation}
To complete the proof, we observe that for $F \in \LLL(\MMM)$, the isomorphism
\begin{equation}
    \varphi_F: \RRR(\Ypoints_{\MMM^F}) \xrightarrow{\, \, \sim \, \, } \RRR(\Zpoints_\MMM)_{=F}
\end{equation}
of vector spaces from the proof of Theorem~\ref{thm:big-locus-identification} commutes with the action of $\Aut(\MMM)_F$. This holds because $\varphi_F$ is defined using the inclusion of variables $y_i^\pm \mapsto y_i^\pm$ for $i \in \III - F$.
\end{proof}

\section{The braid arrangement: An example}
\label{sec:Braid}

This section illustrates the results in this paper for the important special case where $\MMM = \MMM_n$ is the (conditional) oriented matroid arising from the braid arrangement
\[
\AAA_n = \{ x_i - x_j = 0 \,:\, 1 \leq  i < j \leq n \}
\]
in $\RR^n$ where the convex set $\KKK$ is the full space $\RR^n$. We have the ground set $\III = {[n] \choose 2}$. Covectors in $\MMM_n$ (i.e. faces of $\AAA_n$) are indexed by ordered set partitions of $[n]$. Topes in $\TTT(\MMM_n)$ are indexed by permutations in $\symm_n$. The action of $\symm_n$ on $\RR^n$ by coordinate permutation induces an action of $\symm_n$ on $\MMM_n$ by automorphisms.

\subsection{Hilbert series}  The small locus $\Ypoints_{\MMM_n}$ lives in $\FF^{{[n] \choose 2} \times \{+,-\}}$ and has points indexed by chambers in $\AAA_n$, i.e. permutations in $\symm_n$. For $n=3$, these points have coordinates as in the following table where we write permutations $w \in \symm_3$ in one-line notation.

\[
\begin{tabular}{c | c c c c c c }
$w \in \symm_3$ & $12,+$ & $12,-$ & $13,+$ & $13,-$ & $23,+$ & $23,-$ \\ \hline
123 & 1 & 0 & 1 & 0 & 1 & 0 \\
213 & 0 & 1 & 1 & 0 & 1 & 0 \\
132 & 1 & 0 & 1 & 0 & 0 & 1 \\
231 & 0 & 1 & 0 & 1 & 1 & 0 \\
312 & 1 & 0 & 0 & 1 & 0 & 1 \\
321 & 0 & 1 & 0 & 1 & 0 & 1
\end{tabular}
\]

Proposition~\ref{prop:small-locus-identification} identifies $\RRR(\Ypoints_{\MMM_n}) \cong \VVV_{\MMM_n}$ with the (small) graded VG ring associated to the braid arrangement $\AAA_n$. It follows that
\begin{equation}
\label{eq:small-braid-hilbert}
    \Hilb( \RRR(\Ypoints_{\MMM_n});q) = \Hilb( \VVV_{\MMM_n};q) = q \cdot (q+1) \cdot (q+2) \cdots (q+n-1) 
\end{equation}
More generally, if $\AAA$ is a central free hyperplane arrangement with exponents $e_1,\dots,e_n$ and $\MMM$ is the associated oriented matroid, one has (see e.g. \cite{OT}) 
\begin{equation}
    \Hilb(\RRR(\Ypoints_\MMM);q) = \Hilb(\VVV_\MMM;q) = \prod_{i=1}^n (q + e_i).
\end{equation}
Equation~\ref{eq:small-braid-hilbert} is the {\em Stirling distribution} on the symmetric group $\symm_n$. It is the generating function for the statistic $\cyc: \symm_n \to \ZZ_{\geq 0}$ where $\cyc(w)$ is the number of cycles of $w$.

The big locus $\Zpoints_{\MMM_n}$ lives in $\FF^{{[n] \choose 2} \times \{+,-,0\}}$ and has points indexed by ordered set partitions of $[n]$. For $n=3$, the coordinates of these points are as follows.

\[
\begin{tabular}{c | c c c c c c c c c}
    & $12,+$ & $12,-$ & $12,0$ & $13,+$ & $13,-$ & $13,0$ & $23,+$ & $23,-$ & $23,0$ \\ \hline 
$(1 \mid 2 \mid 3)$ & 1 & 0 & 0 & 1 & 0 & 0 & 1 & 0 & 0 \\
$(2 \mid 1 \mid 3)$ & 0 & 1 & 0 & 1 & 0 & 0 & 1 & 0 & 0 \\
$(1 \mid 3 \mid 2)$ & 1 & 0 & 0 & 1 & 0 & 0 & 0 & 1 & 0 \\
$(2 \mid 3 \mid 1)$ & 0 & 1 & 0 & 0 & 1 & 0 & 1 & 0 & 0 \\
$(3 \mid 1 \mid 2)$ & 1 & 0 & 0 & 0 & 1 & 0 & 0 & 1 & 0 \\
$(3 \mid 2 \mid 1)$ & 0 & 1 & 0 & 0 & 1 & 0 & 0 & 1 & 0 \\
$(12 \mid 3)$ & 0 & 0 & 1 & 1 & 0 & 0 & 1 & 0 & 0 \\
$(13 \mid 2)$ & 1 & 0 & 0 & 0 & 0 & 1 & 0 & 1 & 0 \\
$(23 \mid 1)$ & 0 & 1 & 0 & 0 & 1 & 0 & 0 & 0 & 1 \\
$(1 \mid 23)$ & 1 & 0 & 0 & 1 & 0 & 0 & 0 & 0 & 1 \\
$(2 \mid 13)$ & 0 & 1 & 0 & 0 & 0 & 1 & 1 & 0 & 0 \\
$(3 \mid 12)$ & 0 & 0 & 1 & 0  & 1 & 0 & 0 & 1 & 0 \\
$(123)$ & 0 & 0 & 1 & 0 & 0 & 1 & 0 & 0 & 1
\end{tabular}
\]

The poset of flats $\LLL(\MMM_n)$ is the poset $\Pi_n$ of (unordered) set partitions of $[n]$ where $\pi \leq \sigma$ if $\pi$ refines $\sigma$. Corollary~\ref{cor:big-hilbert-series} implies
\begin{equation}
    \label{eq:big-hilbert-series-braid}
    \Hilb(\RRR(\Zpoints_{\MMM_n});q) = \Hilb(\widehat{\VVV}_{\MMM_n};q) = \sum_{\pi \in \Pi_n} q^{n - \# \pi } \cdot   (q+1) \cdot (2q+1) \cdots ((\# \pi - 1)q + 1)
\end{equation}
where $\# \pi$ is the number of blocks of the set partition $\pi$. The first few of these Hilbert series are given in the following table.
\[
\begin{tabular}{c | c}
$n$ & $\Hilb(\RRR(\Zpoints_{\MMM_n});q) = \Hilb(\widehat{\VVV}_{\MMM_n};q) $\\ \hline 
1 & 1 \\
2 & $2q + 1$ \\
3 & $6q^2 + 6q + 1$ \\ 
4 & $26q^3 + 36q^2 + 12q + 1$ \\
5 & $150q^4 + 250q^3 + 120q^2 + 20q + 1$
\end{tabular}
\]
Equation~\eqref{eq:big-hilbert-series-braid} has the following combinatorial interpretation.
\begin{proposition}
    \label{prop:necklaces}
    For any $n$ we have 
    \begin{equation}
    \label{eq:necklaces}
    \sum_{\pi \in \Pi_n} q^{n - \# \pi } \cdot   (q+1)  \cdot (2q+1) \cdots ((\# \pi - 1)q+1) =  
    \sum_{k=0}^n a_{n,k} \cdot q^k
\end{equation}
where $a_{n,k}$ counts unordered $(n-k)$-tuples of necklaces with beads labeled by subsets $S \subseteq [n]$ such that the bead labels form a set partition of $[n]$. 
\end{proposition}

\begin{proof}
    Recall that we have the generating function identity
    \begin{equation}
    \label{eq:stirling-number-first-kind-identity}
        \sum_{w \in \symm_n} q^{n-\cyc(w)} = (q+1)(2q+1) \cdots ((n-1)q+1).
    \end{equation}
    Given a set partition $\pi \in \Pi_n$, let $\symm_\pi \cong \symm_{\# \pi}$ be the group of permutations of the blocks of $\pi$. For $w \in \symm_\pi$, write $\cyc(w)$ for the number of cycles of $w$. For example, if $n  =8$ and $$\pi = \{17 \mid 2 5 \mid 3 \mid 4 \mid  68 \}$$ then one possible element  $w \in \symm_\pi$ is given in cycle notation as 
    $$
    w = (17, \, 68, \, 4)(25, \, 3).
    $$
    In this case we have $\cyc(w) = 2$. Restating the identity \eqref{eq:stirling-number-first-kind-identity} for $\symm_\pi$ gives 
    \begin{equation}
        \label{eq:stirling-variant}
        \sum_{w \in \symm_\pi} q^{\# \pi - \cyc(w)} = (q+1)(2q+1) \cdots ((\# \pi-1)q+1).
    \end{equation}
    Applying \eqref{eq:stirling-variant}, the summand of Equation~\eqref{eq:necklaces} indexed by $\pi$ may be expressed as 
    \begin{equation}
        q^{n - \# \pi } \cdot   (q+1)  \cdot (2q+1) \cdots ((\# \pi - 1)q+1) = \sum_{w \in \symm_\pi} q^{n-\cyc(w)}.
    \end{equation}
    Summing over all $\pi \in \Pi_n$ gives the proposition.
\end{proof}

\begin{example}
Let $n = 3$. The following family of objects witnesses  $a_{3,2} = 6$.    
\begin{scriptsize}
\begin{center}
    \begin{tikzpicture}[scale=1.2]

\begin{scope}
  \draw[thick] (0,0) circle (0.4);

  \foreach \ang/\lab in {90/1,210/2,330/3} {
    \filldraw[fill=white, thick] (\ang:0.4) circle (0.18);
    \node at (\ang:0.4) {\lab};
  }
\end{scope}

\begin{scope}[xshift=2.5cm]
  \draw[thick] (0,0) circle (0.4);

  \foreach \ang/\lab in {90/1,210/3,330/2} {
    \filldraw[fill=white, thick] (\ang:0.4) circle (0.18);
    \node at (\ang:0.4) {\lab};
  }
\end{scope}

\begin{scope}[xshift=5cm]
  \draw[thick] (0,0) circle (0.4);

  \foreach \ang/\lab in {90/12,270/3} {
    \filldraw[fill=white, thick] (\ang:0.4) circle (0.18);
    \node at (\ang:0.4) {\lab};
  }
\end{scope}

\begin{scope}[xshift=7.5cm]
  \draw[thick] (0,0) circle (0.4);

  \foreach \ang/\lab in {90/13,270/2} {
    \filldraw[fill=white, thick] (\ang:0.4) circle (0.18);
    \node at (\ang:0.4) {\lab};

  }
\end{scope}

\begin{scope}[xshift=10cm]
  \draw[thick] (0,0) circle (0.4);

  \foreach \ang/\lab in {90/23,270/1} {
    \filldraw[fill=white, thick] (\ang:0.4) circle (0.18);
    \node at (\ang:0.4) {\lab};

  }
\end{scope}

\begin{scope}[xshift=12.5cm]
  \draw[thick] (0,0) circle (0.4);

  \filldraw[fill=white, thick] (90:0.4) circle (0.22);
  \node at (90:0.4) {123};

\end{scope}

\end{tikzpicture}
\end{center}
\end{scriptsize}
The following family of objects witnesses $a_{3,1}=6$.
\begin{scriptsize}
    \begin{center}
        \begin{tikzpicture}[scale=1.2]
      
\begin{scope}[xshift=0cm]
  \draw[thick] (0,0) circle (0.4);

  \foreach \ang/\lab in {90/1,270/2} {
    \filldraw[fill=white, thick] (\ang:0.4) circle (0.18);
    \node at (\ang:0.4) {\lab};

  }
\end{scope}
\begin{scope}[xshift=1cm]
  \draw[thick] (0,0) circle (0.4);

  \foreach \ang/\lab in {90/3} {
    \filldraw[fill=white, thick] (\ang:0.4) circle (0.18);
    \node at (\ang:0.4) {\lab};

  }
\end{scope}

\begin{scope}[xshift=2.5cm]
  \draw[thick] (0,0) circle (0.4);

  \foreach \ang/\lab in {90/1,270/3} {
    \filldraw[fill=white, thick] (\ang:0.4) circle (0.18);
    \node at (\ang:0.4) {\lab};

  }
\end{scope}
\begin{scope}[xshift=3.5cm]
  \draw[thick] (0,0) circle (0.4);

  \foreach \ang/\lab in {90/2} {
    \filldraw[fill=white, thick] (\ang:0.4) circle (0.18);
    \node at (\ang:0.4) {\lab};

  }
\end{scope}
\begin{scope}[xshift=5cm]
  \draw[thick] (0,0) circle (0.4);

  \foreach \ang/\lab in {90/2,270/3} {
    \filldraw[fill=white, thick] (\ang:0.4) circle (0.18);
    \node at (\ang:0.4) {\lab};

  }
\end{scope}
\begin{scope}[xshift=6cm]
  \draw[thick] (0,0) circle (0.4);

  \foreach \ang/\lab in {90/1} {
    \filldraw[fill=white, thick] (\ang:0.4) circle (0.18);
    \node at (\ang:0.4) {\lab};

  }
\end{scope}
\begin{scope}[xshift=7.5cm]
  \draw[thick] (0,0) circle (0.4);

  \foreach \ang/\lab in {90/12} {
    \filldraw[fill=white, thick] (\ang:0.4) circle (0.18);
    \node at (\ang:0.4) {\lab};

  }
\end{scope}
\begin{scope}[xshift=8.5cm]
  \draw[thick] (0,0) circle (0.4);

  \foreach \ang/\lab in {90/3} {
    \filldraw[fill=white, thick] (\ang:0.4) circle (0.18);
    \node at (\ang:0.4) {\lab};

  }
\end{scope}
\begin{scope}[xshift=10cm]
  \draw[thick] (0,0) circle (0.4);

  \foreach \ang/\lab in {90/13} {
    \filldraw[fill=white, thick] (\ang:0.4) circle (0.18);
    \node at (\ang:0.4) {\lab};

  }
\end{scope}
\begin{scope}[xshift=11cm]
  \draw[thick] (0,0) circle (0.4);

  \foreach \ang/\lab in {90/2} {
    \filldraw[fill=white, thick] (\ang:0.4) circle (0.18);
    \node at (\ang:0.4) {\lab};

  }
\end{scope}
\begin{scope}[xshift=12.5cm]
  \draw[thick] (0,0) circle (0.4);

  \foreach \ang/\lab in {90/23} {
    \filldraw[fill=white, thick] (\ang:0.4) circle (0.18);
    \node at (\ang:0.4) {\lab};

  }
\end{scope}
\begin{scope}[xshift=13.5cm]
  \draw[thick] (0,0) circle (0.4);

  \foreach \ang/\lab in {90/1} {
    \filldraw[fill=white, thick] (\ang:0.4) circle (0.18);
    \node at (\ang:0.4) {\lab};

  }
\end{scope}
        \end{tikzpicture}
    \end{center}
\end{scriptsize}
Finally, the equality $a_{3,0}=1$ is witnessed as follows.
\begin{scriptsize}
    \begin{center}
        \begin{tikzpicture}[scale=1.2]
\begin{scope}[xshift=0cm]
  \draw[thick] (0,0) circle (0.4);

  \foreach \ang/\lab in {90/1} {
    \filldraw[fill=white, thick] (\ang:0.4) circle (0.18);
    \node at (\ang:0.4) {\lab};

  }
\end{scope}

\begin{scope}[xshift=1cm]
  \draw[thick] (0,0) circle (0.4);

  \foreach \ang/\lab in {90/2} {
    \filldraw[fill=white, thick] (\ang:0.4) circle (0.18);
    \node at (\ang:0.4) {\lab};

  }
  \end{scope}
  \begin{scope}[xshift=2cm]
  \draw[thick] (0,0) circle (0.4);

  \foreach \ang/\lab in {90/3} {
    \filldraw[fill=white, thick] (\ang:0.4) circle (0.18);
    \node at (\ang:0.4) {\lab};

  }
  \end{scope}

        \end{tikzpicture}
    \end{center}
\end{scriptsize}
\end{example}

\subsection{$\symm_n$-module structure}
Assume $\FF$ has characteristic 0 or $\mathrm{char}(\FF) > n$.
Let $\CCC_n := \Conf_n(\RR^3)$ be the configuration space of ordered $n$-tuples $(v_1,\dots,v_n)$ of pairwise distinct points in $\RR^3$. The cohomology ring $H^*(\CCC_n)$ is supported in even degrees, and is therefore commutative.

The configuration space $\CCC_n$ admits a natural action of the symmetric group $\symm_n$, and this action is inherited by its cohomology ring $H^*(\CCC_n)$. Moseley proved \cite{Moseley} that $H^*(\CCC_n) \cong \VVV_{\MMM_n}$ as graded $\symm_n$-algebras. It follows that
\begin{equation}
\label{eq:small-cohomology-identification}
\RRR(\Ypoints_{\MMM_n}) \cong \VVV_{\MMM_n} \cong H^*(\CCC_n)
\end{equation}
as graded $\symm_n$-modules. The representations \eqref{eq:small-cohomology-identification} give a graded deformation of the regular representation of $\symm_n$.  Brauner proved \cite[Thm. 4.10]{Brauner} that the graded pieces of \eqref{eq:small-cohomology-identification} carry the Eulerian representations of $\symm_n$ (see also \cite{ABR}). Brauner's result holds in the greater generality of coincidental real reflection groups.

The orbits of $\LLL(\MMM_n)/\Aut(\MMM_n) \cong \Pi_n/\symm_n$ are indexed by integer partitions $\lambda \vdash n$. For an integer partition $\lambda = (\lambda_1,\dots,\lambda_k) \vdash n$, let $G(\lambda) \subseteq \symm_n$ be the subgroup which stabilizes the set partition
\[
\{ \{1, 2, \dots, \lambda_1\}, \, \{\lambda_1+1, \lambda_1 + 2, \dots, \lambda_1 + \lambda_2\}, \, \dots , \, \{n-\lambda_k+1, \dots, n-1,n\} \}.
\]
If $m(i)$ is the multiplicity of $i$ as a part of $\lambda$, one has a group isomorphism
\[
G(\lambda) \cong \prod_{i \geq 1} \symm_{m(i)} \wr \symm_{i}
\]
between $G(\lambda)$ and a direct product of wreath products.

For an integer partition $\lambda \vdash n$, let $\CCC_\lambda$ be the space of ordered $n$-tuples $(v_1,\dots,v_n)$ of points in $\RR^3$ such that
$\{ w \in \symm_n \,:\, w \cdot (v_1,\dots,v_n) = (v_1,\dots,v_n) \}$ is the parabolic subgroup $\symm_\lambda \subseteq \symm_n$. For example, if $\lambda = (3,2,2) \vdash 7$ we have $\CCC_\lambda = \{ (v,v,v,w,w,u,u) \,:\, v,w,u \in \RR^3 \text{ distinct} \}$.
One has a homeomorphism
\begin{equation}
    \CCC_\lambda \cong \CCC_{\ell(\lambda)} 
\end{equation}
where $\ell(\lambda)$ is the number of parts of $\lambda$. The group $G(\lambda)$ acts on $\CCC_\lambda$ by coordinate permutation. The cohomology ring $H^*(\CCC_\lambda)$ inherits a graded action of $G(\lambda)$.
Combining Theorem~\ref{thm:graded-module-structure} and Moseley's result \eqref{eq:small-cohomology-identification} one has the identity of graded $\symm_n$-characters
\begin{equation}
\label{eq:big-braid-character}
    \ch_q (\RRR(\Zpoints_{\MMM_n})) = \ch_q(\widehat{\VVV}_{\MMM_n}) = \sum_{\lambda \vdash n} q^{n-\ell(\lambda)} \ch_q \left( \Ind_{G(\lambda)}^{\symm_n} H^*(\CCC_\lambda)  \right).
\end{equation}
The character \eqref{eq:big-braid-character} is a graded deformation of the permutation action of $\symm_n$ on ordered set partitions of $[n]$.

\subsection{Topological interpretation?} A missing piece in our story is a topological interpretation of the big VG algebra $\widehat{\VVV}_{\MMM_n}$. We leave finding such an interpretation as an open problem.

\begin{problem}
    \label{prob:topological-interpretation}
    Is there a natural enlargement $\CCC_n \subseteq \widehat{\CCC}_n$ of the configuration space $\CCC_n$ which carries an action of $\symm_n$ so that
    \[
    \RRR(\Zpoints_{\MMM_n}) \cong \widehat{\VVV}_{\MMM_n} \cong H^*(\widehat{\CCC}_n)
    \]
    as graded $\symm_n$-algebras?
\end{problem}

The cohomology of a space $\widehat{\CCC}_n$ solving Problem~\ref{prob:topological-interpretation} would be concentrated in even degrees with sum of Betti numbers given by the number of ordered set partitions of $[n]$. When $n = 2$, the hypothetical space $\widehat{\CCC}_2$ would have cohomology presentation
\[
H^*(\widehat{\CCC}_2) = \frac{\FF[y^+,y^-,z]}{I + (y^+ + y^- + z)}
\]
where $I$ is generated by degree 2 monomials in $\{y^+,y^-,z\}$. The space $\widehat{\CCC}_2$ would therefore have
 Betti numbers $(1,0,2,0,0,\dots)$. 

In light of the work of Proudfoot \cite{Proudfoot} and Moseley \cite{Moseley}, Problem~\ref{prob:topological-interpretation} may be best solved with a space $\widehat{\CCC}_n$ equipped with an action of the 1-dimensional torus $T = \CC^*$. Let $\widehat{\Zpoints}_{\MMM_n} \subseteq \FF^{{[n] \choose 2} \times \{+,-,0\}} \times \FF$ be the variety obtained by placing the locus $\Zpoints_{\MMM_n}$ at height 1 and joining every point in this locus to the origin with a line. The work of \cite{Proudfoot, Moseley, DPW, CMR} inspires hope that the $T$-equivariant cohomology of $\widehat{\CCC}_n$ admits presentation
$H^*_T(\widehat{\CCC}_n) = (\widehat{S} \otimes_\FF \FF[t])/\II(\widehat{\Zpoints}_{\MMM_n}).$ The work of Dorpalen-Barry, Proudfoot, and Wang \cite[Thm. 1.1]{DPW} motivates an extension of Problem~\ref{prob:topological-interpretation} to arbitrary real arrangements $\AAA$ in $V$ and arbitrary open convex sets $\KKK \subseteq V.$ Remark~\ref{rmk:HW} suggests that a solution to Problem~\ref{prob:topological-interpretation} might involve an enlargement of matroid Schubert varieties.


\begin{thebibliography}{99}



 \bibitem{ABR} M. Aguiar, S. Brauner, and V. Reiner.
 Configuration spaces and peak representations.
 Preprint, 2025. {\tt arXiv:2508.09898}.

 \bibitem{AB} F. Ardila and A. Boocher. The closure of a linear space in a product of lines. {\em J. Algebraic Combin.}, {\bf 43 (1)} (2016), 199--235.



\bibitem{BCK} H.-J. Bandelt, V. Chepoi, and K. Knauer. 
 COMs: complexes of oriented
matroids. {\em, J. Combin. Theory Ser. A}, {\bf 156} (2018), 195–237.


\bibitem{Brauner} S. Brauner.
Eulerian representations for real reflection groups. 
{\em J. London Math. Soc.}, {\bf 105 (1)} (2022), 412--444.

\bibitem{Brieskorn}
Brieskorn, E. Sur les groupes de tresses (d'apr\`es V.I. Arnold), S\'eminare Bourbaki 24e ann\'ee
1971/72. Springer Lecture Notes No. 317, Berlin: Springer Verlag 1973.


\bibitem{CMR} R. Chou, T. Matsumura, and B. Rhoades. 
Equivariant cohomology of Grassmannian spanning lines.
{\em S\'em. Loth. Comb.} {\bf 93B} (2025), Article \#43.

\bibitem{DB} G. Dorpalen-Barry. The Varchenko-Gelfand Ring of a Cone. {\em J. Alg.}, {\bf 617} (2023), 500--521.

\bibitem{DPW} G. Dorpalen-Barry, N. Proudfoot, and J. Wang. Equivariant Cohomology and Conditional Oriented Matroids.  {\em Int. Math. Res. Not. IMRN}, Vol. 2024, Issue 11 (2024), 9292--9322.

\bibitem{GP} A. M. Garsia and C. Procesi. On certain graded Sn-modules and the q-Kostka polynomials. {\em Adv. Math.}, {\bf 94}
(1992), 82--138.

\bibitem{GR} I. Gelfand and G. L. Rybnikov. Algebraic and topological invariants of oriented matroids. {\em Soviet Math. Dokl.}, {\bf 40 (1)} (1990), 148--152.


\bibitem{HRS} J. Haglund, B. Rhoades, and M. Shimozono. Ordered set partitions, generalized coinvariant algebras, and the
Delta Conjecture. {\em Adv. Math.}, {\bf 329} (2018), 851--915.


\bibitem{HW} J. Huh and B. Wang. Enumeration of points, lines, planes, etc. {\em Acta Math.}, {\bf 218 (2)} (2017), 297--317.


\bibitem{Kostant} B. Kostant. Lie group representations on polynomial rings. {\em Bull. Amer. Math. Soc.}, {\bf 69 (4)} (1963), 518--526.


\bibitem{KR} D. Kroes and B. Rhoades. Packed words and quotient rings. {\em Disc. Math.}, {\bf 345 (9)} (2022), 112945.



\bibitem{Moseley} D. Moseley. Equivariant cohomology and the Varchenko–Gelfand filtration. {\em J. Alg.}, {\bf 472} (2017), 95--114.


\bibitem{OS} P. Orlik and L. Solomon. Combinatorics and topology of complements of hyperplanes.
{\em Invent. Math.}, {\bf 56} (1980), 167--189.

\bibitem{OT} P. Orlik and H. Terao. {\em Arrangements of Hyperplanes}. Vol. 300. Springer Science \& Business Media, 2013.


\bibitem{Oxley} J. Oxley. {\em Matroid Theory}. Oxford University Press, USA, 2006.

\bibitem{Proudfoot} N. Proudfoot. The equivariant Orlik-Solomon algebra. {\em J. Alg.}, {\bf 305} (2006)
1186--1196.

\bibitem{Rhoades} B. Rhoades. Increasing subsequences, matrix loci and Viennot shadows. {\em Forum Math. Sigma}, 12:e97 (2024).

\bibitem{VG} A. N. Varchenko and I. M. Gelfand. 
Heaviside functions of a configuration of hyperplanes. {\em Functional Anal. Appl.} {\bf 21 (4)}  (1987), 255--270.


  
 \end{thebibliography}
\end{document}